\newcommand{\R}
 {\mathbb{R}}
 \newcommand{\E}
 {\mathcal{E}}
\def\vareps{\varepsilon}
\newcommand{\ffrac}{\displaystyle\frac}
\DeclareMathOperator{\dist}{dist}
\begin{document}

\title[Elliptic equations with terms concentrating on the oscillatory boundary]{Semilinear elliptic equations in thin regions \\ with terms concentrating on oscillatory boundaries}

\author[J. M. Arrieta, A. Nogueira and M. C. Pereira]
{Jos\'e M. Arrieta$^*$, Ariadne Nogueira$^\diamond$ and Marcone C. Pereira$^\dagger$}
%$^\star$
%$^\dagger$
%\thanks{%$^\star$Partially supported by FAPESP 2010/51829-7, Brazil,
%	 $^\diamond$Supported by CNPq 141869/2013-5, Brazil. $^\dagger$Partially supported by CNPq 302960/2014-7 and FAPESP 2017/02630-2 Brazil.}

\address{Jos\'e M. Arrieta
\hfill\break\indent Dpto. de An\'alisis Matem\'atico y Matem{\'a}tica Aplicada, Fac. Ciencias Matem\'aticas
\hfill\break\indent  Universidad Complutense de Madrid, 28040 Madrid, Spain. 
\hfill\break\indent and
\hfill\break\indent Instituto de Ciencias Matem\'aticas, CSIC-UAM-UC3M-UCM 
\hfill\break\indent C/ Nicol\'as Cabrera 13-15, 28049 Madrid, Spain }
\email{{\tt arrieta@mat.ucm.es} }

%\hfill\break\indent {\it
%Web page: }{\tt www.mat.ucm.es/$\sim$jarrieta/}}

\address{Ariadne Nogueira
\hfill\break\indent Dpto. de Matem{\'a}tica, IME,
Universidade de S\~ao Paulo, \hfill\break\indent Rua do Mat\~ao 1010, 
S\~ao Paulo - SP, Brazil. } \email{{\tt ariadnen@ime.usp.br} }

\address{Marcone C. Pereira
\hfill\break\indent Dpto. de Matem{\'a}tica Aplicada, IME,
Universidade de S\~ao Paulo, \hfill\break\indent Rua do Mat\~ao 1010, 
S\~ao Paulo - SP, Brazil. } \email{{\tt marcone@ime.usp.br} 
}

\date{}

\subjclass[2010]{34B15, 35J75, 35J91} 
\keywords{Semilinear elliptic equations, singular elliptic equations, upper semicontinuity, lower semicontinuity, thin domains, concentrating terms.} 

\begin{abstract}

In this work we study the behavior of a family of solutions of a semilinear elliptic equation, with homogeneous Neumann boundary condition, posed in a two-dimensional oscillating thin region with reaction terms concentrated in a neighborhood of the oscillatory boundary. 
Our main result is concerned with the upper and lower semicontinuity of the set of solutions. 
We show that the solutions of our perturbed equation can be approximated with ones of a one-dimensional equation, which also captures the effects of all relevant physical processes that take place in the original problem.

\end{abstract}

\maketitle
\numberwithin{equation}{section}
\newtheorem{theorem}{Theorem}[section]
\newtheorem{lemma}[theorem]{Lemma}
\newtheorem{corollary}[theorem]{Corollary}
\newtheorem{proposition}[theorem]{Proposition}
\newtheorem{remark}[theorem]{Remark}
\newtheorem{definition}[theorem]{Definition}
\allowdisplaybreaks

\section{Introduction}

In this paper we investigate the behavior of a family of solutions given by a semilinear elliptic equation, with homogeneous Neumann boundary condition, defined in a two-dimensional oscillating thin region $R_\vareps$ with reaction terms concentrated in a neighborhood $o_\vareps$ of the oscillatory boundary of $R_\varepsilon$. We deal with an elliptic reaction-diffusion equation posed in the bounded open set $R_\vareps$ which degenerates into a line segment as the positive parameter $\vareps$ goes to zero. Also, we assume that the reaction of the model only occur in a narrow strip $o_\vareps$ close to the border, which also can present high oscillatory structure.
See Figure \ref{fig:dominio_inicial} below which illustrates the open region $R_\varepsilon$ and the narrow neighborhood $o_\varepsilon$ mentioned here. 

\begin{figure}[h]
	\centering\includegraphics[scale=0.5]{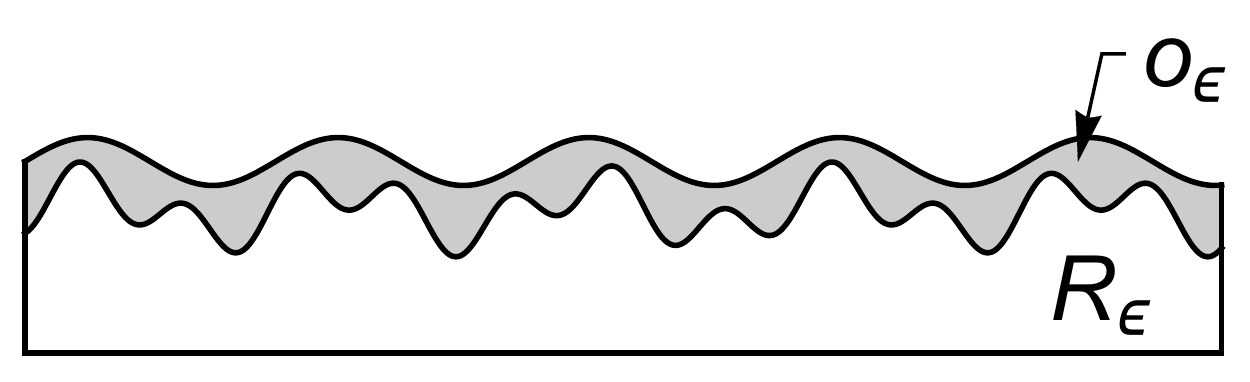}
	\caption{The thin domain $R_\vareps$ and the strip $o_\vareps$ where reactions take place.} \label{fig:dominio_inicial}
\end{figure}

Our main result is that the family of solutions are upper and lower semicontinuous at $\vareps=0$. Indeed, we show that the starting singular equation defined in the two-dimensional region can be approximated with one which is a one-dimensional regular equation, which captures the effects of all relevant physical processes that take place in the original problem. Therefore, the limit equation will preserve features of the original system, giving conditions to access the qualitative behavior of the modeled problem in a simpler way.

Let us recall that elliptic boundary value problem models diffusion and interactions among agents which can be cells, amount of chemicals or biological organisms.
Thus, we are supposing here that the agents are located in an extremely thin region with reactions taking place just in a small neighborhood of the border.
It is worth noting that our model includes the possibility that the thin region as well the narrow neighborhood present high oscillatory behavior, modeling complex regions of interactions.

Potential applications of our results can be seen for instance in \cite{S,MS,LC,CLS,PB,PS,Apl3,MTA} where theoretical and practical aspects of mathematical modeling and applications are investigated. 
The fields mentioned are such as lubrication, nanotechnology, fluid-structure interaction mechanism in vascular dynamics and management and control of aquatic ecological systems,  where one can find localized concentrations in connection with boundary complexity in thin channels. 

There are several works dealing with partial differential equations posed in thin domains. 
We first mention the pioneering works \cite{HaleR,R}, as well the subsequent papers \cite{E,PR01,PRR}, where the authors investigate the asymptotic behavior of dynamical systems given by a class of semilinear parabolic equations in thin domains of $\R^n$, $n \geq 2$.
We also cite \cite{PS3, Ricardo} where the $p$-Laplacian problem in thin regions is considered, and \cite{MP}, which studies a linear elliptic problem in perforated thin domains with rapidly varying thickness.
In \cite{BGG} the authors consider nonlinear monotone problems in a multidomain with a highly oscillating boundary.
In \cite{arr_marcone_artigo_base,AP2,AP3,AVP,MR2, mcp2,AM,AMJM} and references therein, we have recently studied many classes of oscillating thin regions for elliptic and parabolic equations with Neumann boundary conditions, discussing limit problems and convergence properties. For nonlocal equations in thin structures we also mention \cite{GH,GH2,MJ,MJ2}.

On the other hand, there are many works in the literature concerned with singular elliptic and parabolic problems  featuring potential and reactions terms concentrated in a small neighborhood of a portion of the boundary. In fixed bounded domains, we cite the pioneering works \cite{arrieta,anibal,anibal2}. In regions presenting oscillatory behavior, we mention the recent ones \cite{GSim,GSim2}. In \cite{APP,APP2} we also have studied problems allowing narrow strips with oscillatory border in fixed bounded open sets. 

Our main goal here is to discuss a model combining these both singular situations (the thin domain problem and concentrated reactions) in a more general framework. For this, we generalize \cite{marcone_saulo} adapting methods and techniques  developed in \cite{arrieta} to deal with concentrated integrals getting appropriate estimates. Then, we can pass to the limit in our model obtaining its asymptotic behavior at $\epsilon = 0$.  

The paper is organized as follows: in Section \ref{resultado}, we set our assumptions, notations and state the main result concerning to the upper and lower semicontinuity of the set of solutions. In Section \ref{espacos}, we introduce our functional setting, and obtain results which allow us to estimate the concentrated integrals.
In Section \ref{nonlinear}, we deal with nonlinear maps related with the nonlinear reaction terms of the equation, and in Section \ref{mainsec}, we show our main result getting the asymptotic behavior of the solutions at $\epsilon=0$.

\section{Assumptions, notations and main result}
\label{resultado}

Let us consider the following semilinear elliptic equation with homogeneous Neumann boundary conditions
\begin{align}
\label{original}
\begin{cases}
-\Delta v^\vareps + v^\vareps = \ffrac{1}{\vareps}\chi^{o_\vareps}f(v^\vareps) \ & \text{ in } R_\vareps \\ 
\ffrac{\partial v^\vareps}{\partial \nu^\vareps} = 0 \ & \text{ on } \partial R_\vareps
\end{cases}
\end{align}
where for each $\vareps>0$, $R_\vareps \subset \R^2$ is an oscillating thin domain given by
$$
R_\vareps = \{(x,y); \ 0<x<1, \ 0<y<\varepsilon g(x/\vareps)\}. 
$$
The vector $\nu^\vareps = (\nu^\vareps_1,\nu^\vareps_2)$ denotes the unit outward normal vector to the boundary $\partial R_\vareps$, 
$\partial/\partial \nu^\vareps$ is the normal derivative, and $\chi^{o_\vareps}$ is the characteristic function of the set $o_\vareps$ defined by  
\begin{equation*}
o_\vareps = \{(x,y); \ 0<x<1, \ \varepsilon(g(x/\vareps)-\vareps h(x/\vareps^\beta))<y<\varepsilon g(x/\vareps)\}.
\end{equation*}
We assume 
\begin{enumerate}[$(i)$]
	\item $\beta>0$;
	\item the nonlinearity $f: \R\to\R$ is a $\mathcal{C}^2$ function;
	\item the functions $g,h : (0,1) \to \R$ are positive, $L_g$ and $L_h$-periodic, respectively, and possess $g_0,g_1,h_0,h_1\in\R$ such that 
	\begin{equation*}
	0<g_0\leq g(x)\leq g_1<\infty, \quad 0\leq h_0\leq h(x)\leq h_1<\infty, \quad \forall x\in (0,1);
	\end{equation*}
	\item $g$ has bounded derivative.
\end{enumerate}

\begin{remark}\label{muh}
	Notice that, calling $g_\vareps(x)=g(x/\vareps)$ and $h_\vareps(x)=h(x/\vareps^\beta)$, it follows from \cite[Theorem 2.6]{doina} that there exist $\mu_g$, $\mu_h\in\R$ such that
	\begin{equation*}
	g_\vareps\stackrel{*}{\rightharpoonup}\mu_g=\ffrac{1}{L_g}\int_0^{L_g}g(s)ds \quad \text{ and } \quad h_\vareps\stackrel{*}{\rightharpoonup}\mu_h=\ffrac{1}{L_h}\int_0^{L_h}h(s)ds \ \text{ in } L^\infty(0,1).
	\end{equation*}
	The constants $\mu_g$ and $\mu_h$ are the average of the periodic functions $g$ and $h$ respectively. 
\end{remark}

Let us emphasize that $R_\vareps \subset (0,1) \times (0, \vareps g_1)$ is a two-dimensional thin region with oscillatory boundary 
which degenerates to the unit interval as $\vareps \to 0$. 
Also,  $o_\vareps \subset R_\vareps$ represents an oscillating $\varepsilon$-neighborhood to the upper boundary of $R_{\vareps}$ where the reaction term takes place. 

Notice that here, we are in agreement with \cite{arrieta}. We combine the characteristic function $\chi^{o_{\vareps}}$ and the positive parameter $\vareps$ in order to set concentration of reactions on the small strip $o_\vareps\subset R_\vareps$ through the term
\begin{equation*}
\ffrac{1}{\vareps}\chi^{o_\vareps}\in L^\infty(R_\vareps).
\end{equation*}

We will show that, in a certain functional setting, the family of solutions from the perturbed problem \eqref{original} converges to a solution of a one-dimensional equation of the same type, with homogeneous Neumann boundary condition, capturing the variable profile of the domain $R_\vareps$ as well as the oscillatory behavior of the neighborhood $o_\vareps$. Indeed, we obtain the following limit problem 
\begin{equation}\label{limite}
\begin{cases}
-q_0u_{xx}+u=f_0(u) \quad \text{ in }  (0,1), \\ u_x(0)=u_x(1)=0
\end{cases}
\end{equation}
with \begin{equation}\label{q_0}
q_0=\ffrac{1}{|Y^*|}\int_{Y^*}\left\{ 1-\ffrac{\partial X}{\partial y_1}(y_1,y_2) \right\} dy_1dy_2 \quad \text{ and } \quad f_0(\cdot)=\ffrac{L_g}{|Y^*|}\mu_h f(\cdot).
\end{equation}
The function $X$ is the unique solution of the auxiliary problem
\begin{equation*}
\begin{cases}
-\Delta X=0 \text { in } Y^* \\
\frac{\partial X}{\partial N}=0 \text{ in } B_2 \\
\frac{\partial X}{\partial N}=N_1 \text{ in } B_1 \\ X \text{ is $L_g$-periodic in }y_1 \\ \int_{Y^*} Xdy_1dy_2=0
\end{cases}
\end{equation*} 
where $Y^*$ given by
\begin{equation*}
Y^*=\{(y_1,y_2)\in\R^2; \ 0<y_1<L_g, 0<y_2<g(y_1) \}
\end{equation*}
is the representative cell of the thin region $R_\vareps$. The vector $N=(N_1,N_2)$ is the outward normal vector to the boundary $\partial Y^*$ with $B_1$ and $B_2$ denoting the upper and lower boundary of $\partial Y^*$ respectively.

Notice that the diffusion coefficient $q_0$, usually called homogenized coefficient, exhibit the effect of the geometry and the oscillatory behavior of the thin region. 
On the other hand, nonlinearity $f_0$ captures the influence of the concentration neighborhood on the reaction term $f$.
The limit problem \eqref{limite} is often called homogenized equation.

It is worth noting that the results obtained here generalize the ones from \cite{marcone_saulo} since the thin domain analyzed there does not exhibit any oscillatory behavior.
Furthermore, we emphasize that our task is not easy here. In order to accomplish our goal, we have to be able to estimate the solutions in very small neighborhoods of the oscillatory boundary in such way that we can pass to the limit at $\vareps=0$.

The solutions of our problem are defined in open sets which varies with respect to parameter $\vareps>0$. 
Thus, the first step in our analysis is to set an approach in order to face this domain perturbation problem. 
Here we adopt the same strategy used, for instance, in \cite{arr_marcone_artigo_base}.
We rescale the thin region $R_\vareps$ keeping the $x$-coordinate and multiplying the values of $y$ by a factor $1/\vareps$ avoiding the thin domain situation.  
Performing this change of variable, we obtain the following problem:
\begin{equation} \label{inicio}
\left\{
\begin{gathered}
-\ffrac{\partial^2 u^\vareps}{\partial x_1^2}-\ffrac{1}{\vareps^2}\ffrac{\partial^2 u^\vareps}{\partial x_2^2}+u^\vareps = \ffrac{1}{\vareps}\chi^{\theta_\vareps}f(u^\vareps) \quad  \text { in } \Omega_\vareps, \\
\ffrac{\partial u^\vareps}{\partial x_1}N_1^\vareps + \ffrac{1}{\vareps^2}\ffrac{\partial u^\vareps}{\partial x_2}N_2^\vareps=0  \quad \text{ on } \partial \Omega_\vareps,
\end{gathered}
\right.
\end{equation}
where $N^\vareps = (N_1^\vareps,N_2^\vareps)$ is the outward normal vector to the boundary $\partial \Omega_\vareps$,
\begin{equation*}\label{dom_fino}
\begin{gathered}
\Omega_\vareps = \{(x_1,x_2); \ 0<x_1<1, \ 0<x_2<g_\varepsilon(x_1)\} \quad \textrm{ and } \\
\theta_\vareps = \{(x_1,x_2); \ 0<x_1<1, \ g_\varepsilon(x_1) - \vareps h_\vareps(x_1)<x_2<g_\varepsilon(x_1)\}.
\end{gathered}
\end{equation*}

\begin{figure}[h]
	\centering\includegraphics[scale=0.3]{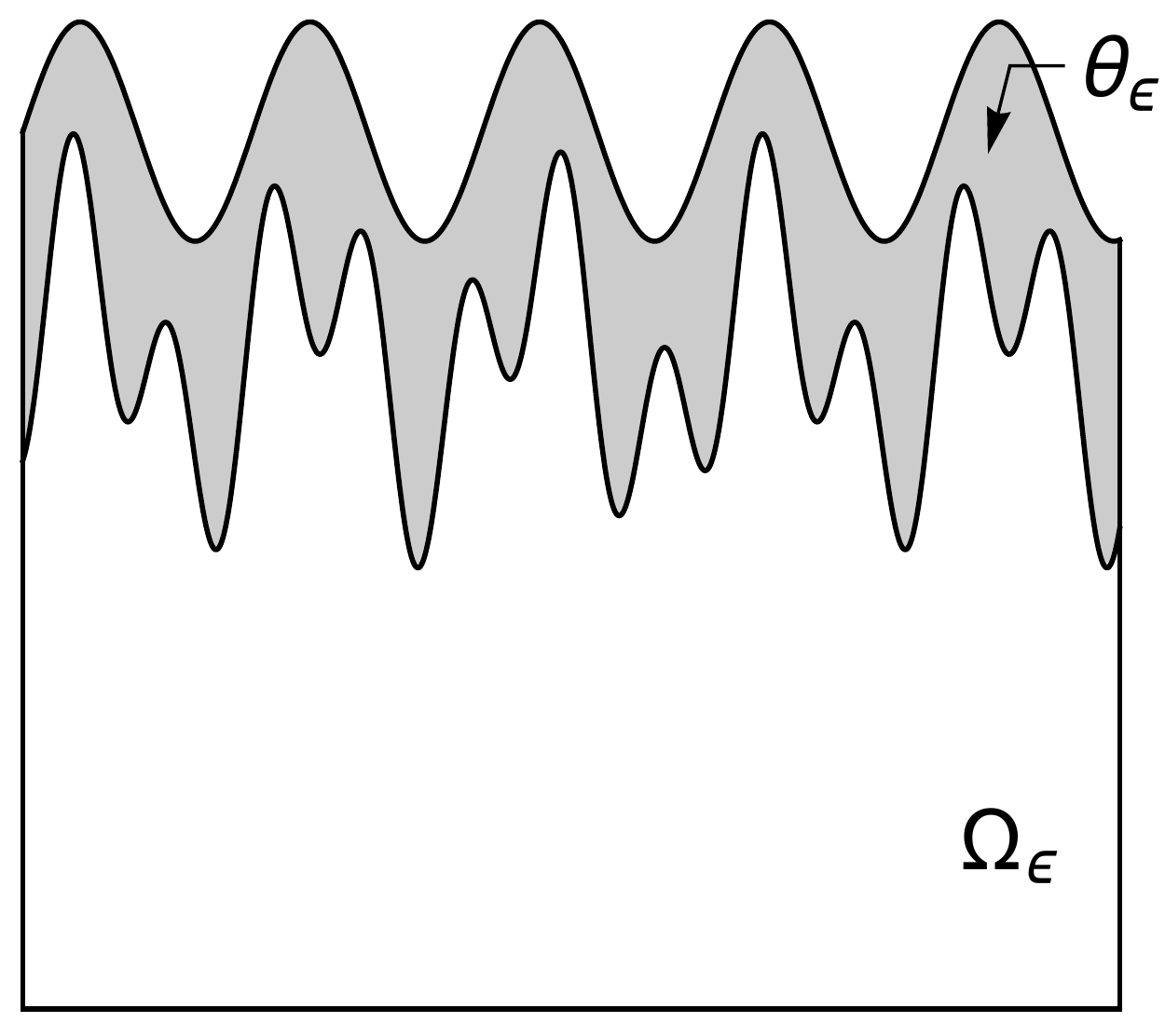}
	\caption{The modified domain $\Omega_\vareps$ and the neighborhood $\theta_\vareps$.} \label{fig:dominio_modificado}
\end{figure}

It is not difficult to see that problems \eqref{original} and \eqref{inicio} are equivalent.
In some sense, we have rescaled the neighborhood $o_\varepsilon$ into the strip $\theta_\varepsilon \subset \Omega_\vareps$ and substituted the thin region $R_\epsilon$ by the oscillating domain $\Omega_\vareps$, at a cost of introducing a very strong diffusion mechanism in the $x_2$-direction given by the factor $1/\vareps^2$. This will make the solutions from \eqref{inicio} to become more and more homogeneous in this direction as $\vareps$ goes to zero. 
In this way, the limit solution will not  depend on $x_2$, and therefore, the limit equation will be one dimensional. Notice that is in full agreement with the intuitive idea that a partial differential equation posed in a thin domain should approach one defined in a line segment.

In order to obtain our convergence results, we have to compare functions defined in different functional spaces. Fixed $1/2<s<1$, we consider the Lebesgue-Bochner spaces 
$$X = L^2(0,1;H^s(0,g_1)) \quad \textrm{ and } \quad X_\vareps = L^2(0,1;H^s(0,g_\vareps(x_1))),$$ 
which will be discussed in Section \ref{espacos}, as well as the Hilbert space $X_0=L^2(0,1)$ with the norm given by $$\|u\|_{X_0}=\sqrt{\mu_g}\|u\|_{L^2(0,1)}.$$
The perturbed problem \eqref{original} will be set in $X_\vareps$, and the limit equation \eqref{limite} in $X_0$.

Since $X_0\subset X_\vareps$, we can consider the operator
\begin{align}\label{edef}
E_\vareps : X_0 \to& X_\vareps \nonumber \\ u(x_1)&\mapsto (E_\vareps u)(x_1,x_2)=u(x_1)
\end{align}
which satisfies 
\begin{eqnarray*}
\|E_\vareps u\|^2_{X_\vareps} & = & \int_{0}^{1}\|(E_\vareps u)(x_1,\cdot)\|_{H^s(0,g_\vareps(x_1))}^2dx_1=\int_{0}^{1}g_\vareps(x_1)|u(x_1)|^2dx_1 \\
& \to & \mu_g\|u\|_{L^2(0,1)}=\|u\|^2_{X_0}, \  \textrm{ as } \varepsilon \to 0.
\end{eqnarray*}
As in \cite{CP}, we obtain an appropriate way to compare solutions from \eqref{inicio} and \eqref{limite}.

\begin{definition}
We say that $\{ u^\vareps \}_{\vareps >0} \subset X_\vareps$ $E$-converges to a function $u\in X_0$, if $\|u^\vareps-E_\vareps u\|_{X_\vareps}\to0$ as $\vareps\to0$, where $E_\vareps$ is given by \eqref{edef}.
It is denoted by $u_\vareps\xrightarrow{E}u$.
\end{definition}
	
This notion of convergence can be also extended to sets in the following manner: let $J_\vareps$ be a family of sets in $X_\vareps$. We say that $J_\varepsilon\subset X_\vareps$ is
\begin{enumerate}[(i)]
	\item upper semicontinuous at $\vareps=0$, if $\dist_H(J_\vareps, E_\vareps J_0)\xrightarrow{\vareps\to0}0$;
	\item lower semicontinuous at $\vareps=0$, if $\dist_H(E_\vareps J_0,J_\vareps)\xrightarrow{\vareps\to0}0$.
	%\item contínua em $\vareps=0$ se é semicontínua superiormente e inferiormente,
\end{enumerate}
Here, $\dist_H(A,B)$ denotes the Hausdorff semi-distance given by 
\begin{equation*}\label{haus}
\dist_H(A,B)=\sup_{x\in A} \inf_{y\in B}\|x-y\|_{X_\vareps}.
\end{equation*}

\begin{remark}\label{conv_cont} Also, the following characterizations are very useful:
	\begin{enumerate}[(i)]
		\item The family $\{J_\vareps\}$ is upper semicontinuous at $\vareps=0$ if every sequence $\{u_\vareps\}$, with $u_\vareps\in J_\vareps$ and $\vareps\to0$, has a subsequence $E$-convergent to an element of $J_0$;
		\item The family $\{J_\vareps\}$ is lower semicontinuous at $\vareps=0$ if $J_0$ is compact and for all $u\in J_0$ exists a sequence $\{u_\vareps\}$, with $u_\vareps\in J_\vareps$ and $\vareps\to0$, such that $u_\vareps\xrightarrow{E}u$.
	\end{enumerate}
\end{remark}

Finally, let us consider, for $0<\vareps\leq\vareps_0$, the folowing sets given by problem \eqref{inicio} 
$$\E_{\vareps,R}=\{u^\vareps\in H^1(\Omega_\vareps); \ u^\vareps \text{ is a solution of } \eqref{inicio} \text{ and } \|u^\vareps\|_{L^\infty(\Omega_{\vareps})}\leq R\}$$
and
$$\E_{0,R}=\{u\in H^1(0,1); \ u \text{ is a solution of } \eqref{limite} \text{ and } \|u\|_{L^\infty(0,1)}\leq R\}.$$
Now, we state our main result which concerns on upper and lower semicontinuity of the set $\E_{\vareps,R}$ at $\vareps=0$. 
\begin{theorem}\label{teo_main}
	\begin{enumerate}[(a)]
		\item For any sequence $u_\vareps\in\E_{\vareps, R}$, with $\vareps\to0$, there is a subsequence (also denoted by $u_\vareps$) and $u_0\in \E_{0,R}$ such that $u_\vareps\xrightarrow{E}u_0$ in $X_\varepsilon$ when $\vareps\to0$.
		\item For any hyperbolic equilibrium point  $u_0\in\E_{0,R}$, there is sequence  $u_\vareps\in\E_{\vareps,R}$ such that $u_\vareps\xrightarrow{E}u_0$ in $X_\varepsilon$ as $\vareps\to0$.
	\end{enumerate}
\end{theorem}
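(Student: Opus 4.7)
The plan is to treat the two parts separately: (a) by a compactness argument combined with passage to the limit in the variational formulation, and (b) by a Newton/contraction argument around the hyperbolic equilibrium.

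For (a), I would start from the weak form of \eqref{inicio}. Testing with $u^\vareps$ itself, and controlling the right-hand side via the concentrated integral estimates from Section \ref{espacos} together with $\|u^\vareps\|_\infty \le R$, one obtains the anisotropic bound
\[
\|u^\vareps\|_{L^2(\Omega_\vareps)}^2 + \left\|\ffrac{\partial u^\vareps}{\partial x_1}\right\|_{L^2(\Omega_\vareps)}^2 + \ffrac{1}{\vareps^2}\left\|\ffrac{\partial u^\vareps}{\partial x_2}\right\|_{L^2(\Omega_\vareps)}^2 \le C
\]
with $C$ independent of $\vareps$. The $1/\vareps^2$ factor on the $x_2$-derivative is crucial: it forces any weak cluster point to be independent of $x_2$ in the limit, producing a candidate $u_0(x_1)$. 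Combined with a Rellich-type compactness argument transferred to the varying domain (via extension to the fixed strip $(0,1)\times(0,g_1)$), this upgrades weak to strong convergence in the $X_\vareps$ topology, giving $u^\vareps \xrightarrow{E} u_0$.

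The next step is to identify the equation for $u_0$. Testing \eqref{inicio} with $E_\vareps\varphi$ for $\varphi\in C^1([0,1])$, the diffusion term
\[
\int_{\Omega_\vareps} \ffrac{\partial u^\vareps}{\partial x_1}\ffrac{\partial (E_\vareps\varphi)}{\partial x_1}\, dx_1 dx_2
\]
is handled by standard periodic unfolding on the reference cell $Y^*$: the corrector $X$ appears in the two-scale limit and the homogenized coefficient $q_0$ of \eqref{q_0} emerges naturally. For the concentrated nonlinear term
\[
\ffrac{1}{\vareps}\int_{\theta_\vareps} f(u^\vareps)\, E_\vareps\varphi\, dx_1 dx_2,
\]
the concentration estimates of Section \ref{espacos} together with the weak-$*$ convergence $h_\vareps\stackrel{*}{\rightharpoonup}\mu_h$ from Remark \ref{muh} yield exactly $\frac{L_g\mu_h}{|Y^*|}\int_0^1 f(u_0)\varphi = \int_0^1 f_0(u_0)\varphi$, where the $\vareps h_\vareps$ strip width cancels the $1/\vareps$ prefactor. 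Hence $u_0$ solves \eqref{limite}, and $\|u_0\|_\infty \le R$ is inherited by lower semicontinuity of the $L^\infty$ norm from $\|u^\vareps\|_\infty \le R$.

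For (b), hyperbolicity of $u_0\in\E_{0,R}$ means the linearized operator $L_0 v = -q_0 v_{xx} + v - f_0'(u_0)v$ with Neumann conditions is an isomorphism. I would rewrite \eqref{inicio} as a fixed-point equation $u = T_\vareps(u)$ where $T_\vareps$ couples the linear resolvent of the principal part with the Nemytskii operator of the concentrated nonlinearity. Part (a) applied to the linear problem with right-hand side $f(u_0)$ shows that $E_\vareps u_0$ is an approximate fixed point, i.e.\ $\|T_\vareps(E_\vareps u_0)-E_\vareps u_0\|_{X_\vareps}\to 0$. The main additional ingredient is spectral convergence of the linearized operators $L_\vareps$ to $L_0$: by repeating the argument of (a) on the linearization one obtains uniform invertibility of $I - DT_\vareps(E_\vareps u_0)$ for small $\vareps$. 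A contraction mapping in a small $X_\vareps$-ball around $E_\vareps u_0$ then produces $u_\vareps$ with $u_\vareps\xrightarrow{E} u_0$, and elliptic bootstrap combined with $E$-convergence gives $\|u_\vareps\|_\infty \le R$ for $\vareps$ small, so $u_\vareps\in\E_{\vareps,R}$.

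The main obstacle I expect is controlling the concentrated nonlinear integral simultaneously with the two independent oscillations present in the problem: the strip $\theta_\vareps$ has oscillating width $\vareps h(x_1/\vareps^\beta)$ at scale $\vareps^\beta$, while the upper boundary of $\Omega_\vareps$ oscillates at a different scale $\vareps$ via $g_\vareps$. Separating these two scales cleanly, and matching them to the Bochner space $X_\vareps = L^2(0,1; H^s(0,g_\vareps(x_1)))$, is delicate; the requirement $s>1/2$ is precisely what makes the trace-on-a-curve estimates uniform with respect to the oscillation of $\partial\Omega_\vareps$, and thus what makes the concentration/homogenization limit commute.
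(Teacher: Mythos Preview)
Your proposal is correct in its broad strokes, but it is organized rather differently from the paper's proof.

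The paper does not work directly with the variational formulation. Instead it rewrites both \eqref{inicio} and \eqref{limite} as fixed-point equations $u = A_\vareps^{-1} F_\vareps(u)$ and $u = A_0^{-1} F_0(u)$, proves that the nonlinear maps $A_\vareps^{-1} F_\vareps$ converge \emph{compactly} to $A_0^{-1} F_0$ in the sense of Carvalho--Piskarev (Proposition~\ref{cc1_sim}), and then simply invokes the abstract upper/lower semicontinuity results for fixed points under compact convergence from \cite{arrieta_simone_lips,dumb_1}. Thus both (a) and (b) become one-line citations once the compact-convergence machinery is in place. Your approach is more hands-on: a direct compactness-plus-homogenization argument for (a), and an explicit Newton/contraction construction for (b). The underlying analytic ingredients are the same---the anisotropic energy bound, the extension operator of Lemma~\ref{lema_ext}, the concentrated-integral estimates of Theorem~\ref{lema_int_conc}, and the convergence of Proposition~\ref{int_conv1} for the nonlinear term---but the paper packages them as ``$A_\vareps^{-1} F_\vareps \xrightarrow{CC} A_0^{-1} F_0$'' rather than unwinding the fixed-point argument by hand. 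In particular, the identification of the homogenized coefficient $q_0$ is not redone via unfolding: the paper quotes \cite[Theorem~4.3]{arr_marcone_artigo_base} as a black box inside Proposition~\ref{passo1}(iii) and Proposition~\ref{cc1_sim}(c).

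One point in your sketch deserves care. In (b) you assert that ``elliptic bootstrap combined with $E$-convergence gives $\|u_\vareps\|_\infty \le R$''. Convergence in $X_\vareps$ is $L^2$-based and does not by itself control $L^\infty$; in the anisotropic setting a uniform $L^\infty$ bootstrap requires justification. The paper avoids this issue entirely via Remark~\ref{limit_f}: $f$ is cut off outside $|u|\le R$ \emph{before} any of the analysis, so boundedness of $f$ and its derivatives is used freely and the constraint $\|u_\vareps\|_\infty\le R$ never has to be recovered a posteriori. If you keep your direct route, you should adopt the same cut-off device rather than rely on a bootstrap.
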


\begin{remark}
Recall that a solution $u$ of a boundary value problem is hyperbolic if $\lambda=0$ is not an eigenvalue of the linearized problem around $u$. 
In other words, $u \in \E_{0,R}$ is hyperbolic if $\lambda=0$ is not an eigenvalue of the eigenvalue problem 
$$
\left\{ 
\begin{gathered}
- q_0 v_{xx} + v = \partial_u f_0(u) v + \lambda v \quad \textrm{ in } (0,1) \\       
v_x(0)=v_x(1)=0
\end{gathered} \right. 
$$
where $q_0$ and $f_0$ are defined in \eqref{q_0}.
\end{remark}

\begin{remark} \label{limit_f}
Since we are concerned with solutions which are  uniformly bounded  in $L^\infty(\Omega)$, we may take $f$ of class $\mathcal{C}^2$ bounded with bounded derivatives. In fact, we may perform a cut-off in $f$ outside the region $|u| \leq R$  without modifying any of these solutions (see for instance \cite[Remark 2.2]{arrieta_simone_lips} or \cite[Remark 2.2]{dumb_1}).
\end{remark}

\begin{remark}
The assertions $(a)$ and $(b)$ in Theorem \ref{teo_main} respectively mean {upper} and {lower} semicontinuity of the equilibria set to the parabolic problem associated with \eqref{inicio} at $\epsilon=0$.  
\end{remark}

\section{Functional spaces and concentrated integrals}
\label{espacos}

In this section, we first establish the functional spaces used to analyze the concentrated integrals. Next, we perform some estimates in such functional spaces.

\begin{definition}
	Let $s=m+\sigma>0$, with $m=0,1,\ldots$, $0\leq \sigma<1$ and let $O\subset R^n$ a domain. For $1\leq p<\infty$, we call fractional Sobolev space $W^{s,p}(O)$ the space of functions $u$ such that
	%of distributions defined in $O$ such that
	\begin{enumerate}[(i)]
		\item $u\in W^{m,p}(O)$ if $\sigma=0$,
		%$\partial^\alpha u\in L^p(O)$, for $|\alpha|\leq m$, when $s=m\in\N$;
		\item $u\in W^{m,p}(O)$ and 
		\begin{equation*}
		\iint_{O\times O}\ffrac{|\partial^\alpha u(x)-\partial^\alpha u(y)|^p}{|x-y|^{n+\sigma p}}dxdy<\infty
		\end{equation*}
		if $\sigma>0$. %for $|\alpha|=m$, $s=m+\sigma$.
	\end{enumerate}
	%\end{definition}
	
	The norm in $W^{s,p}(O)$, that makes it Banach, is:
	\begin{equation*}
	\|u\|_{W^{m,p}(O)}^p=\sum_{|\alpha|\leq m}\int_{O}|\partial^\alpha u|^pdx \ \text{ in the case }(i)
	\end{equation*}
	and
	\begin{equation*}
	\|u\|_{W^{s,p}(O)}^p=\|u\|_{W^{m,p}(O)}^p+\sum_{|\alpha|=m} \iint_{O\times O}\ffrac{|\partial^\alpha u(x)-\partial^\alpha u(y)|^p}{|x-y|^{n+\sigma p}}dxdy\ \text{ in the case }(ii).
	\end{equation*}
	
	Furthermore, if $p=2$ we call it $H^s(O)$ and it is a Hilbert space.\\
\end{definition}

%We can also define the fractional Sobolev dual space. \\
%
%{\bf \color{red} Ver se essa é a melhor  maneira de definir os espaços duais ou se é melhor definir $W^{-s,q}(O)$ como o dual do fecho de $\mathcal{D}(O)$ em $W^{s,p}(O)$ que nem no Grisvard ou se há outra forma}\\
%
%\begin{definition}
%	Consider $s>0$ and $O\subset R^n$ a domain. We define the fractional Sobolev dual space from $W^{s,p}(O)$, called $W^{-s,q}(O)$, where $p,q$ are conjugates, as the space of the linear and continuous applications from $W^{s,p}(O)$ into $\R$.
%\end{definition}

Now let us follow \cite{leb_boch} to introduce what we call Lebesgue and Sobolev-Bochner generalized spaces. They are a natural generalization to Lebesgue and Sobolev spaces using Bochner integrals.
The usual Lebesgue and Sobolev-Bochner spaces may be found for instance in \cite{doina,cazenave}. \\
%
%\par\medskip  {\color{red}  EN LA SIGUIENTE DEFINICION Y HASTA LA PROXIMA SUBSECCION HE CONSIDERADO UNA FUNCION GENERAL $G(X_1)$ EN VEZ DE LAS FUNCIONES $g_\vareps(x_1)$.  !!!!!}
%
%\par\medskip 
%
%{\color{red}

Let us consider a function $G:(0,1)\to \R$ satisfying that there exist $0<G_0\leq G_1$ with $G_0\leq G(x)\leq G_1$.

\begin{definition}
	Let us consider a function $G:(0,1)\to \R$ satisfying $0<G_0\leq G(x)\leq G_1$ for some constants  $0<G_0\leq G_1$. 
Let $1\leq p\leq\infty$, $1\leq q<\infty$. The Lebesgue-Bochner generalized spaces, denoted by $L^p(0,1;L^q(0,G(x_1)))$, are defined by
	\begin{equation*}
	L^p(0,1;L^q(0,G(x_1))):=\{u:\Omega_{\vareps}\to\R \text{ measurable}; u(x_1,\cdot)\in L^q(0,G(x_1)) \text{ for almost every } x\in\Omega\}
	\end{equation*}
	and they are Banach spaces with the norm
	\begin{align*}
	\|u\|_{L^p(0,1;L^q(0,G(x_1)))}=\begin{cases}
	\left(\displaystyle \int_0^1\|u(x_1,\cdot)\|_{L^q(0,G(x_1))}^p dx_1\right)^{1/p}, \ &p<\infty, \\ \\
	\displaystyle \mathrm{ess} \sup_{x\in(0,1)}\|u(x_1,\cdot)\|_{L^q(0,G(x_1))}, \ &p=\infty.
	\end{cases}	
	\end{align*}
	
	When $p=q=2$ such space is Hilbert with the inner product
	\begin{equation*}
	(u,v)_{L^2(0,1;L^2(0,G(x_1))}=\displaystyle \int_0^1(u(x_1,\cdot),v(x_1,\cdot))_{L^2(0,G(x_1))}dx_1.
	\end{equation*}
\end{definition}

\begin{remark}
	Since $q<\infty$, the function $x_1\mapsto\|u(x_1,\cdot)\|_{L^q(0,G(x_1))}$ is measurable by Fubini's Theorem. Then the space $L^p(0,1;L^q(0,G(x_1))$ is well defined. 
\end{remark}

Analogously, the Sobolev-Bochner generalized spaces, denoted by $L^p(0,1;W^{s,q}(0,G(x_1)))$ for $s>0$, are defined by
\begin{equation*}
L^p(0,1;W^{s,q}(0,G(x_1))):=\{u\in L^p(0,1;L^q(0,G(x_1))); u(x_1,\cdot)\in W^{s,q}(0,G(x_1))\}.
\end{equation*}

Such spaces are Banach with the norm
\begin{align*}
\|u\|_{L^p(0,1;W^{s,q}(0,G(x_1)))}=\begin{cases}
\left(\displaystyle\int_0^1\|u(x_1,\cdot)\|_{W^{s,q}(0,G(x_1))}^pdx_1\right)^{1/p}, \ &p<\infty, \\ \\ \displaystyle \mathrm{ess} \sup_{x_1\in(0,1)}\|u(x_1,\cdot)\|_{W^{s,q}(0,G(x_1))}, \ &p=\infty,
\end{cases}
\end{align*}
and, again, they are Hilbert spaces if $p=q=2$.\\

%{\color{red} VER SE PRECISA PROVAR MAIS ALGUMA COISA AO DIZER ISSO}

In general, it follows from \cite[Proposition 3.59]{doina} that, if $H$ is a Hilbert space and $1\leq p<\infty$, then the dual space of $L^p(0,1;H)$ is given by 
$$[L^p(0,1;H)]' = L^q(0,1; H'),$$
where $H'$ is the dual space of $H$ and $p,q$ are conjugates. \\

%\begin{remark}
	In our case we will consider the family of Lebesgue and Sobolev-Bochner generalized space for the function $G(x_1)=g_\vareps(x_1)$, with $g_\vareps$ as in Section \ref{resultado} (see Remark \ref{muh}).
%	\end{remark}

\subsection{Some technical results} 

Next we will get some non-trivial properties which are important in our context.
First, we construct a unidimensional extension operator that will help us to work with different definitions of Sobolev fractional spaces, making their norms equivalent.

\begin{lemma}\label{ext_sob_uni}
	Fix $\vareps>0$ and $x_1\in(0,1)$, if we call $I_\vareps = (0,g_\vareps(x_1))$, with $g_\vareps$ as in Section \ref{resultado} (see Remark \ref{muh}), then there exists a continuous linear extension operator $P: L^2(I_\vareps)\to L^2(\R)$ such that $Pu=u$ in $I_\vareps$, with $\|Pu\|_{L^2(\R)}\leq \lambda_0\|u\|_{L^2(I_\vareps)}$, $\|Pu\|_{H^s(\R)}\leq \lambda_s\|u\|_{H^s(I_\vareps)}$ and $\|Pu\|_{H^1(\R)}\leq \lambda_1\|u\|_{H^1(I_\vareps)}$, for $0<s<1$, where the constants $\lambda_0,\lambda_s,\lambda_1\geq 1$ are independent of $\vareps>0$ and $x_1\in (0,1)$.
\end{lemma}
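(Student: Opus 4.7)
\emph{Strategy.} Writing $L := g_\vareps(x_1)$, the hypotheses on $g$ give $g_0 \leq L \leq g_1$, so the interval $I_\vareps = (0,L)$ has length bounded uniformly below and above, independently of $\vareps$ and $x_1$. The natural construction is by \emph{reflection followed by a smooth cut-off}. Set $\delta := g_0/4$, fix once and for all a smooth function $\rho \in C^\infty(\R;[0,1])$ with $\rho \equiv 0$ on $(-\infty,-\delta]$ and $\rho \equiv 1$ on $[0,\infty)$, and define
\begin{equation*}
u_0(x) := \begin{cases} u(-x), & -\delta < x < 0, \\ u(x), & 0 \leq x \leq L, \\ u(2L-x), & L < x < L+\delta, \end{cases} \qquad (Pu)(x) := \rho(x+\delta)\,\rho(\delta+L-x)\,u_0(x),
\end{equation*}
extending $Pu$ by zero outside $(-\delta, L+\delta)$. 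Since $\delta < g_0 \leq L$, the reflections across the endpoints are well defined, and the cut-off $\varphi_L(x) := \rho(x+\delta)\rho(\delta+L-x)$ equals $1$ on $[0,L]$, vanishes outside $[-\delta,L+\delta]$, and satisfies $\|\varphi_L\|_{C^1(\R)} \leq C(\rho)$ uniformly in $L$.

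\emph{Key steps.} For the $L^2$ estimate, the bound $|Pu|\leq|u_0|$ together with the change of variables in the reflected parts gives $\|Pu\|_{L^2(\R)}^2 \leq 3\|u\|_{L^2(I_\vareps)}^2$. For the $H^1$ estimate, the continuity of $u_0$ across $x=0$ and $x=L$ yields $u_0\in H^1(-\delta,L+\delta)$ with $\|u_0'\|_{L^2}^2 \leq 3\|u'\|_{L^2(I_\vareps)}^2$; the product rule $(Pu)' = \varphi_L' u_0 + \varphi_L u_0'$ combined with the uniform control on $\|\varphi_L\|_{C^1}$ then gives a constant $\lambda_1$ depending only on $\rho$. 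For the intermediate range $0<s<1$, I would argue directly via the Gagliardo seminorm: splitting the double integral over $(-\delta,L+\delta)^2$ into same-sign and cross regions, the same-sign contributions reduce to $[u]_{H^s(I_\vareps)}^2$ after the changes of variables $x\mapsto -x$ or $x\mapsto 2L-x$, while the typical cross term
\begin{equation*}
\int_0^\delta\!\int_0^L\frac{|u(x)-u(y)|^2}{(x+y)^{1+2s}}\,dx\,dy
\end{equation*}
is majorized by $[u]_{H^s(I_\vareps)}^2$ through the elementary inequality $(x+y)^{1+2s}\geq |x-y|^{1+2s}$ valid for $x,y\geq 0$. Multiplication by the cut-off $\varphi_L$ (which vanishes at the boundary of its support) then gives a bounded map into $H^s(\R)$ with norm controlled by $\|\varphi_L\|_{C^1}$, hence by $C(\rho)$. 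As an alternative, once $P$ has been shown to act boundedly on both $L^2$ and $H^1$ by a single operator, interpolation yields the $H^s$ bound with $\lambda_s \leq \lambda_0^{1-s}\lambda_1^s$.

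\emph{Main obstacle.} The only delicate point is the \emph{uniformity} of all constants in $\vareps$ and $x_1$. This is precisely where the two-sided bound $g_0 \leq g_\vareps(x_1) \leq g_1$ plays its role: the lower bound $L \geq g_0$ guarantees that the reflections never ``overshoot'' the opposite endpoint, so the fixed choice $\delta = g_0/4$ is ample for every admissible $L$, while the cut-off $\varphi_L$ is built by translating a fixed profile $\rho$, so its $C^1$-norm is independent of $L$. All the seminorm estimates above are either translation invariant or explicit and uniform in $L$, which yields the desired constants $\lambda_0,\lambda_s,\lambda_1 \geq 1$ depending only on $g_0,g_1,\rho$ and $s$.
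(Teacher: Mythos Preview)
Your approach is essentially the paper's: extend by reflection, then multiply by a smooth cutoff, and control the Gagliardo seminorm by splitting into same-block and cross-block contributions. The organization differs slightly. The paper first extends $I_\vareps$ to the \emph{fixed} interval $(0,g_1)$ by reflection across the right endpoint---which forces a case distinction and an \emph{iterated} reflection when $g_1>2g_0$---and only then applies an $\vareps$-independent cutoff to reach $\R$. You instead reflect symmetrically by a fixed amount $\delta=g_0/4$ on each side and use an $L$-dependent cutoff with uniform $C^1$ norm; this avoids the case split entirely, which is a genuine simplification. Your cross-term inequality $(x+y)\ge|x-y|$ is exactly the mechanism the paper uses (there in the form $|z_1-y_2|\le 3|y_1-y_2|$ after the change of variable). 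The interpolation alternative you mention is also fine and is in fact how the paper uses this lemma downstream (Proposition~\ref{equiv_sob_norma_int2}).

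One slip to repair: with your choice of $\rho$ (zero on $(-\infty,-\delta]$, one on $[0,\infty)$), the factor $\rho(x+\delta)$ transitions on $(-2\delta,-\delta)$, not on $(-\delta,0)$, and likewise on the right. Hence $\varphi_L\equiv 1$ on all of $[-\delta,L+\delta]$ and does nothing; your $Pu$ is then just $u_0$ extended by zero, which in general fails to lie in $H^1(\R)$ (and for $s>1/2$ in $H^s(\R)$) because $u_0$ need not vanish at $\pm\delta$ from the endpoints. The fix is immediate: either take $\rho$ with transition on $[0,\delta]$ so that $\varphi_L$ vanishes at $x=-\delta$ and $x=L+\delta$, or keep your $\rho$ and define $u_0$ on the larger interval $(-2\delta,L+2\delta)$, which is still legitimate since $2\delta=g_0/2<g_0\le L$.
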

\begin{proof}
	Notice that $I_0 := (0,g_0)\subset I_\vareps$, for all $\vareps>0$. The construction of the extension operator will be in two steps: first we will extend the functions from $I_\varepsilon$ into $I=(0,g_1)$. Next, from interval $I$ into  $\R$.

If $2g_0\geq g_1$, we define $P_\vareps$ by a reflection procedure. If $\varphi\in L^2(I_\vareps)$,
\begin{equation*}
(P_\vareps\varphi)(y) = \begin{cases}\varphi(y), & \text{ if }y\in I_\vareps \\ 
\varphi(2g_\vareps(x_1)-y), & \text{ if }y\in I\smallsetminus I_\vareps \end{cases}.
\end{equation*}

Let us see that $P_\vareps$ 
is well defined, that is, that $(2g_\vareps(x_1)-y)\in I_\vareps$ if $y\in I\smallsetminus I_\vareps$. Indeed if $y\in I\smallsetminus I_\vareps$,
\begin{align*}\begin{cases}
2g_\vareps(x_1)-y>2g_\vareps(x_1)-g_1\geq 2g_\vareps(x_1)-2g_0>0\\2g_\vareps(x_1)-y<2g_\vareps(x_1)-g_\vareps(x_1) = g_\vareps(x_1)
\end{cases}\end{align*}
then $(2g_\vareps(x_1)-y)\in I_\vareps$. 

Now, let us show the continuity of the operator. If $\varphi\in H^1(I_\vareps)$,
\begin{align*}
\|P_\vareps\varphi\|^2_{L^2(I)} & = \|\varphi\|^2_{L^2(I_\vareps)}+\int_{2g_\vareps(x_1)-g_1}^{g_\vareps(x_1)} |\varphi(z)|^2dz \leq \|\varphi\|^2_{L^2(I_\vareps)}+\int_{2g_0-g_1}^{g_\vareps(x_1)} |\varphi(z)|^2dz \leq 2  \|\varphi\|^2_{L^2(I_\vareps)}.
\end{align*}
Besides
\begin{align*}
\left\|\ffrac{\partial P_\vareps\varphi}{\partial y}\right\|^2_{L^2(I)} & =  \int_{I_\vareps} \left|\ffrac{\partial \varphi}{\partial y}\right|^2+ \int_{g_\vareps(x_1)}^{g_1} \left|\ffrac{\partial}{\partial y} \varphi(2g_\vareps(x_1)-y)\right|^2 dy \\ & \leq  \left\|\ffrac{\partial \varphi}{\partial y}\right\|^2_{L^2(I_\vareps)}+\int_{2g_0-g_1}^{g_\vareps(x_1)} \left|\ffrac{\partial \varphi}{\partial y}(z)\right|^2dz \leq 2  \left\|\ffrac{\partial \varphi}{\partial y}\right\|^2_{L^2(I_\vareps)}.
\end{align*}

For $0<s<1$, we have
\begin{align*}
\|P_\vareps \varphi\|^2_{H^s(I)}&=\|P_\vareps \varphi\|^2_{L^2(I)}+\iint_{I\times I}\ffrac{|P_\varepsilon\varphi(y_1)-P_\varepsilon\varphi(y_2)|^2}{|y_1-y_2|^{1+2s}}dy \\ & \leq 2\|\varphi\|^2_{L^2(I_\vareps)} + \iint_{I_\vareps\times I_\vareps}\ffrac{|P_\varepsilon\varphi(y_1)-P_\varepsilon\varphi(y_2)|^2}{|y_1-y_2|^{1+2s}}dy +\iint_{A}\ffrac{|P_\varepsilon\varphi(y_1)-P_\varepsilon\varphi(y_2)|^2}{|y_1-y_2|^{1+2s}}dy \\ & \qquad +\iint_{B}\ffrac{|P_\varepsilon\varphi(y_1)-P_\varepsilon\varphi(y_2)|^2}{|y_1-y_2|^{1+2s}}dy+\iint_{C}\ffrac{|P_\varepsilon\varphi(y_1)-P_\varepsilon\varphi(y_2)|^2}{|y_1-y_2|^{1+2s}}dy \\ & \leq \|\varphi\|^2_{L^2(I_\vareps)} + \|\varphi\|^2_{H^s(I_\vareps)} + I_1+I_2+I_3,
\end{align*}
where
\begin{align*}
A = \{g_\vareps(x_1)< y_1<g_1\}\times\{g_\varepsilon(x_1)<y_2<g_1 \},\\
B = \{g_\vareps(x_1)< y_1<g_1\}\times\{0<y_2<g_\varepsilon(x_1) \},\\
C = \{0< y_1<g_\varepsilon(x_1)\}\times\{g_\varepsilon(x_1)<y_2<g_1 \}.
\end{align*} 

We analyze each integral separately. If we change variables as
$z_i=2g_\varepsilon(x_1)-y_i$, $i=1, 2$,
we get:
\begin{align*}
\iint_{A}\ffrac{|P_\varepsilon\varphi(y_1)-P_\varepsilon\varphi(y_2)|^2}{|y_1-y_2|^{1+2s}}dy & = \int_{g_\varepsilon(x_1)}^{g_1}\int_{g_\varepsilon(x_1)}^{g_1}\ffrac{|\varphi(2g_\varepsilon(x_1)-y_1)-\varphi(2g_\varepsilon(x_1)-y_2)|^2}{|y_1-y_2|^{1+2s}}dy \\ & \leq 2\iint_{I_\vareps\times I_\vareps}\ffrac{|\varphi(z_1)-\varphi(z_2)|^2}{|z_1-z_2|^{1+2s}}dz.
\end{align*}

On the other hand, if $(y_1,y_2)\in B$, then $y_2<g_\vareps(x_1)$, and if we call $z_1=2g_\varepsilon(x_1)-y_1$
\begin{align*}
\iint_{B}\ffrac{|P_\varepsilon\varphi(y_1)-P_\varepsilon\varphi(y_2)|^2}{|y_1-y_2|^{1+2s}}dy & = \int_{g_\varepsilon(x_1)}^{g_1}\int_{0}^{g_\varepsilon(x_1)}\ffrac{|\varphi(2g_\varepsilon(x_1)-y_1)-\varphi(y_2)|^2}{|y_1-y_2|^{1+2s}}dy \\ & = \iint_{I_\vareps\times I_\vareps}\ffrac{|\varphi(z_1)-\varphi(y_2)|^2}{|2g_\varepsilon(x_1)-z_1-y_2|^{1+2s}}dy_2dz_1\\ & \leq  3\iint_{I_\vareps\times I_\vareps}\ffrac{|\varphi(z_1)-\varphi(y_2)|^2}{|z_1-y_2|^{1+2s}}dy_2dz_1,
\end{align*}
since 
\begin{align*}
|z_1-y_2|&=|2g_\varepsilon(x_1)-y_1-y_2|= |y_2+y_1-2g_\varepsilon(x_1)| \\ & \leq |y_2-y_1|+2|y_1-g_\varepsilon(x_1)|\leq 3|y_1-y_2|=3|2g_\varepsilon(x_1)-z_1-y_2|.
\end{align*}

Analogously, we can show that 
\begin{align*}
\iint_{C}\ffrac{|P_\varepsilon\varphi(y_1)-P_\varepsilon\varphi(y_2)|^2}{|y_1-y_2|^{1+2s}}dy\leq 3\iint_{I_\vareps\times I_\vareps}\ffrac{|\varphi(y_1)-\varphi(z_2)|^2}{|y_1-z_2|^{1+2s}}dz_2dy_1
\end{align*}
proving
\begin{equation*}
\|P_\varepsilon\varphi\|_{H^s(I)}\leq C\|\varphi\|_{H^s(I_\vareps)}.
\end{equation*}

Now if $g_1>2g_0$, we first extend the initial function $\varphi$ in the direction of negative $y$ and then construct $P_\vareps$ in an analogous way to the previous one. In fact, if $\varphi_0$ is defined in $I_\vareps$, we can extend it to $\{y\in\R; \ -g_0<y<g_\vareps(x_1)\}$ as
\begin{equation*}
\varphi_1(y) = \begin{cases} \varphi_0(y), & \text{ if } 0<y<g_\vareps(x_1) \\   \varphi_0(-y), & \text{ if }  -g_0<y\leq 0 \end{cases}.
\end{equation*}

Iteratively, we can take 
\begin{equation*}
\varphi_n(y) = \begin{cases} \varphi_{n-1}(y), & \text{ if }  -(n-1)g_0<y<g_\vareps(x_1) \\   \varphi_{n-1}(-y-2(n-1)g_0), & \text{ if }  -ng_0<x_2\leq -(n-1)g_0 \end{cases}.
\end{equation*}

Thus, given $\varphi_0\in H^1(I_\vareps)$, there is $n$ sufficiently large such that $ng_0>g_1$, and then, we can define $P_\vareps$ as
\begin{equation*}
(P_\vareps\varphi_0)(y) = \begin{cases}\varphi_n(y), & \text{ if }y\in I_\vareps \\ 
\varphi_n(2g_\vareps(x_1)-y), & \text{ if }y\in I\smallsetminus I_\vareps \end{cases}.
\end{equation*}

It is not difficult to see that $P_\vareps$ is well defined.
Besides using $ng_0>g_1$, if $I_\varepsilon^n=(-ng_0,g_1)$, we have 
\begin{align*}
\|P_\vareps&\varphi_0\|^2_{L^2(I)} = \int_{I} |P_\vareps\varphi_0|^2 =  \int_{I_\vareps} |P_\vareps\varphi_0|^2 +  \int_{I\smallsetminus I_\vareps} |P_\vareps\varphi_0|^2 \\ & = \int_{I_\vareps} |\varphi_n|^p+ \int_{g_\vareps(x_1)}^{g_1} |\varphi_n(x_1,2g_\vareps(x_1)-y)|^2dy \leq    \int_{I_\vareps} |\varphi_n|^2+\int_{2g_\vareps(x)-g_1}^{g_\vareps(x_1)} |\varphi_n(y)|^2dy \\ & \leq 2 \|\varphi_n\|^2_{L^2(I_\vareps^n)} \leq 2(n+1)\|\varphi_0\|^2_{L^2(I_\vareps)}.
\end{align*}

In a similar way, we can perform the same estimate in $H^s(I)$ and $H^1(I)$, obtaining a extension operator from $I_\vareps$ into the interval $I$. 
Finally, we can set $P$ to the whole real line. 
Indeed, for $u\in H^1(I_\vareps)$, let $\psi\in C^\infty_c(\R)$ such that $I\subset supp(\psi)$, with $\psi=1$ in $I$ and $\psi=0$ in $\R\setminus (-g_0,g_1+g_0)$. 
Then we set
\begin{equation*}
Pu=\psi P_\varepsilon(u),
\end{equation*}
completing the proof.
\end{proof}

Now we state some properties of  Lebesgue and Sobolev-Bochner  generalized spaces that we will be needed in the analysis below.

\begin{proposition}\label{equiv_sob_norma_int2}
	Let $I_\vareps=(0,g_\vareps(x_1))$, with $\vareps>0$, $x_1\in (0,1)$ and $0<s<1$ fixed. Then there exist $C_1$, $C_2>0$ independent of $\vareps$ such that
	\begin{equation*}
	C_1\|u\|_{H^s(I_\vareps)}\leq \|u\|_{H^s_{[]}(I_\vareps)}\leq C_2\|u\|_{H^s(I_\vareps)}, \forall u\in H^s(I_\vareps),
	\end{equation*}
	where $H^s_{[]}(I_\vareps)$ is the complex interpolation space
	\begin{equation*}
	H^s_{[]}(I_\vareps)=[L^2(I_\vareps),H^1(I_\vareps)]_s, \ \text{ for } 0<s<1.
	\end{equation*}
\end{proposition}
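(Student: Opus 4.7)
The plan is to transfer the equivalence from the full real line, where it is classical with universal constants, to the intervals $I_\vareps$ via the uniform extension operator from Lemma \ref{ext_sob_uni}. On $\R$ the Slobodeckij space coincides with the complex interpolation space $[L^2(\R), H^1(\R)]_s$ with equivalent norms (a classical fact, e.g.\ Bergh--L\"ofstr\"om or Lions--Magenes): there exist universal constants $c_1, c_2 > 0$ such that
\begin{equation*}
c_1 \|w\|_{H^s(\R)} \leq \|w\|_{[L^2(\R),H^1(\R)]_s} \leq c_2 \|w\|_{H^s(\R)}, \quad \forall w\in H^s(\R).
\end{equation*}
These constants are absolute; in particular they are independent of $\vareps$.

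Next I would use the extension operator $P = P_\vareps: L^2(I_\vareps)\to L^2(\R)$ from Lemma \ref{ext_sob_uni}, which is simultaneously bounded $L^2(I_\vareps)\to L^2(\R)$ with norm $\lambda_0$ and $H^1(I_\vareps)\to H^1(\R)$ with norm $\lambda_1$, with $\lambda_0,\lambda_1$ independent of $\vareps$. By the interpolation property of complex interpolation, $P_\vareps$ is bounded from $H^s_{[]}(I_\vareps) = [L^2(I_\vareps), H^1(I_\vareps)]_s$ into $[L^2(\R), H^1(\R)]_s$ with norm at most $\lambda_0^{1-s}\lambda_1^{s}$. Similarly, the restriction operator $R: w \mapsto w|_{I_\vareps}$ is contractive from $L^2(\R)$ to $L^2(I_\vareps)$ and from $H^1(\R)$ to $H^1(I_\vareps)$ (obvious from the definitions), so by interpolation $R: [L^2(\R), H^1(\R)]_s \to H^s_{[]}(I_\vareps)$ has norm at most $1$. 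Finally, $R$ is also trivially contractive from $H^s(\R)$ to $H^s(I_\vareps)$ in the Slobodeckij norm, since restricting the domain of integration only decreases both the $L^2$ part and the double integral.

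With these four bounded maps available, the equivalence follows by a diagram chase. Since $R\circ P_\vareps = \mathrm{id}$ on $I_\vareps$, for $u\in H^s(I_\vareps)$ I would compute
\begin{align*}
\|u\|_{H^s_{[]}(I_\vareps)} &= \|R(P_\vareps u)\|_{H^s_{[]}(I_\vareps)} \leq \|P_\vareps u\|_{[L^2(\R),H^1(\R)]_s} \\
&\leq c_2 \|P_\vareps u\|_{H^s(\R)} \leq c_2 \lambda_s \|u\|_{H^s(I_\vareps)},
\end{align*}
which gives one inequality with $C_2 = c_2\lambda_s$. For the reverse inequality, still using $u = R(P_\vareps u)$,
\begin{align*}
\|u\|_{H^s(I_\vareps)} &= \|R(P_\vareps u)\|_{H^s(I_\vareps)} \leq \|P_\vareps u\|_{H^s(\R)} \\
&\leq \frac{1}{c_1} \|P_\vareps u\|_{[L^2(\R),H^1(\R)]_s} \leq \frac{\lambda_0^{1-s}\lambda_1^{s}}{c_1}\|u\|_{H^s_{[]}(I_\vareps)},
\end{align*}
yielding $C_1 = c_1 \lambda_0^{s-1}\lambda_1^{-s}$. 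All four constants $c_1, c_2, \lambda_0, \lambda_1, \lambda_s$ are independent of $\vareps$, so the equivalence is uniform.

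The main obstacle, and the reason the argument is structured this way, is that one cannot apply the classical intrinsic equivalence directly on $I_\vareps$ without tracking how its constants depend on the length of the interval; the lengths $g_\vareps(x_1)$ vary with $\vareps$ and $x_1$, and a naive use of standard Sobolev-interpolation identities would produce constants depending on $\vareps$. The uniform extension operator from Lemma \ref{ext_sob_uni} is precisely what allows us to move the problem to the fixed ambient space $\R$ where the equivalence is universal, so the delicate step is really the prior construction of $P_\vareps$ with $\vareps$-independent bounds; once that is granted, the interpolation argument is routine.
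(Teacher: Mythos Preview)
Your proof is correct and follows essentially the same approach as the paper: both arguments use the uniform extension operator of Lemma~\ref{ext_sob_uni} to transfer the problem to $\R$, invoke the classical equivalence $H^s(\R)=[L^2(\R),H^1(\R)]_s$, and return via restriction/interpolation, obtaining the same constants $C_1=c_1\lambda_0^{s-1}\lambda_1^{-s}$ and $C_2=c_2\lambda_s$. The only cosmetic difference is that the paper packages the interpolation step by introducing the quotient space $\bar H^s(I_\vareps)=\{v|_{I_\vareps}:v\in H^s(\R)\}$ and citing \cite[Lemma~4.2]{interpolation_sob}, whereas you carry out that step directly using the boundedness of $P_\vareps$ and $R$ under complex interpolation.
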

\begin{proof}
	Using that there exists a continuous linear extension operator $P: L^2(I_\vareps)\to L^2(\R)$ given by Lemma \ref{ext_sob_uni}, if we define the space $\bar{H}^s(I_\vareps) = \{v _{|_{I_\vareps}} ; \ v\in H^s(\R)\}$, then for all $u\in H^s(I_\vareps)$ 
	\begin{equation*}
	K^{-1}\|u\|_{H^{s}(I_\vareps)}\leq\|u\|_{\bar{H}^s(I_\vareps)}\leq K\lambda_s \|u\|_{H^s(I_\vareps)}
	\end{equation*}
	where $K>0$ and $\|P\|_{\mathcal{L}(H^s(I_\vareps),H^s(\R))}\leq \lambda_s$ are independent of $\vareps$, with same notation from Lemma \ref{ext_sob_uni}. Indeed, if $u\in \bar{H}^s(I_\vareps)$, there is $U\in H^s(\R)$ such that $u=U_{|_{I_\vareps}}$ and $\|u\|_{\bar{H}^s(I_\vareps)}=\|U\|_{H(\R)}$. It follows that
	\begin{equation*}
	\|u\|_{H^{s}(I_\vareps)}\leq \|U\|_{H^s(\R)}=\|u\|_{\bar{H}^s(I_\vareps)}.
	\end{equation*}
	
	Reciprocally, if $u\in H^s(I_\vareps)$, then $u=U{|_{I_\vareps}}$ for $U=P u$. %, where $P$ is the extension operator from Lemma \ref{ext_sob_uni}. 
	It follows that $u\in \bar{H}^s(I_\vareps)$, with
	\begin{equation*}
	\|u\|_{\bar{H}^s(I_\vareps)}\leq \|U\|_{H^s(\R)}=\|P u\|_{H^s(\R)}\leq \lambda_s\|u\|_{H^s(I_\vareps)}.
	\end{equation*} 
	
	Analogously, by \cite[Lemma 4.2]{interpolation_sob} and Lemma \ref{ext_sob_uni}, we have
	\begin{equation*}
	\lambda_0^{s-1}\lambda_1^{-s}\|u\|_{\bar{H}^s(I_\vareps)}\leq \|u\|_{H^\theta_{[]}(I_\vareps)}\leq \|u\|_{\bar{H}^s(I_\vareps)}.
	\end{equation*}
	Then
	\begin{align*}
	\lambda_0^{s-1}\lambda_1^{-s}K^{-1}\|u\|_{H^{s}(I_\vareps)}&\leq \lambda_0^{s-1}\lambda_1^{-s}\|v\|_{\bar{H}^s(I_\vareps)}\leq \|u\|_{H^\theta_{[]}(I_\vareps)} \\ & \leq \|u\|_{\bar{H}^s(I_\vareps)} \leq K\lambda_s\|u\|_{H^s(I_\vareps)}
	\end{align*}
	proving the result for $C_1=\lambda_0^{s-1}\lambda_1^{-s}K^{-1}$ and $C_2=K\lambda_s$, with $0<s<1$.
\end{proof}

\begin{proposition}
	\label{leb_boch_int} For each $\vareps>0$, $H^1(\Omega_\vareps)\hookrightarrow L^2(0,1;H^s(0,g_\varepsilon(x_1)))$ for all $0\leq s\leq 1$, with embedding  constant independent of $\vareps$. Moreover, if   $0<s<1$, the embedding is compact. 
%	$H^1(\Omega_\vareps)\hookrightarrow  L^2(0,1;H^s(0,g_\vareps(x_1)))$ with compact embedding if $0<s<1$. 
\end{proposition}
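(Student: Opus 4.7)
The plan is to treat the endpoints $s=0$ and $s=1$ separately and then reach all intermediate values by complex interpolation, exploiting the uniform equivalence of norms provided by Proposition \ref{equiv_sob_norma_int2}.

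For $s=0$ the embedding is the identity $H^1(\Omega_\vareps)\hookrightarrow L^2(\Omega_\vareps)=L^2(0,1;L^2(0,g_\vareps(x_1)))$ with constant $1$. For $s=1$, Fubini gives
\[
\int_0^1\!\|u(x_1,\cdot)\|_{H^1(0,g_\vareps(x_1))}^2\,dx_1
=\int_0^1\!\!\int_0^{g_\vareps(x_1)}\!\!(|u|^2+|\partial_{x_2}u|^2)\,dx_2dx_1
\le\|u\|_{H^1(\Omega_\vareps)}^2,
\]
again with constant independent of $\vareps$. Thus the continuous embeddings hold at the endpoints uniformly in $\vareps$.

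For $0<s<1$, by Proposition \ref{equiv_sob_norma_int2} the norm of $H^s(I_\vareps)$ is equivalent, uniformly in $\vareps$ and $x_1$, to the complex interpolation norm $[L^2(I_\vareps),H^1(I_\vareps)]_s$. The interpolation inequality then yields a constant $C>0$, independent of $\vareps$ and $x_1$, such that
\[
\|v\|_{H^s(I_\vareps)}\le C\,\|v\|_{L^2(I_\vareps)}^{1-s}\,\|v\|_{H^1(I_\vareps)}^{s},\qquad v\in H^1(I_\vareps).
\]
Applying this to $v=u(x_1,\cdot)$ and using Young's inequality with exponents $1/(1-s)$ and $1/s$ pointwise in $x_1$, one gets
\[
\int_0^1\!\|u(x_1,\cdot)\|_{H^s(I_\vareps)}^2dx_1
\le C^2\!\int_0^1\!\bigl((1-s)\|u(x_1,\cdot)\|_{L^2}^{2}+s\|u(x_1,\cdot)\|_{H^1}^{2}\bigr)dx_1
\le C^2\|u\|_{H^1(\Omega_\vareps)}^2,
\]
which is the claimed continuous embedding with $\vareps$-independent constant.

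For compactness when $0<s<1$ (here $\vareps$ is fixed), let $\{u_n\}$ be bounded in $H^1(\Omega_\vareps)$. By Rellich--Kondrachov there is a subsequence (not relabelled) converging in $L^2(\Omega_\vareps)$. Applying the pointwise interpolation inequality to $u_n-u_m$ and then Hölder's inequality with the same conjugate exponents $1/(1-s)$ and $1/s$, we obtain
\[
\int_0^1\!\|u_n-u_m\|_{H^s(I_\vareps)}^2dx_1
\le C^2\,\|u_n-u_m\|_{L^2(\Omega_\vareps)}^{2(1-s)}\,\|u_n-u_m\|_{L^2(0,1;H^1(I_\vareps))}^{2s}.
\]
The first factor tends to $0$ along the Rellich subsequence, while the second factor stays bounded by the continuous embedding at $s=1$. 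Hence $\{u_n\}$ is Cauchy in $L^2(0,1;H^s(I_\vareps))$, proving compactness.

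The only delicate point is making sure that the interpolation constant $C$ does not depend on $\vareps$; this is precisely what Proposition \ref{equiv_sob_norma_int2}, which in turn rests on the uniform extension operator of Lemma \ref{ext_sob_uni}, was designed to provide. Everything else is a routine Young/Hölder argument combined with Rellich's theorem.
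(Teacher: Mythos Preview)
Your proof is correct. For the continuous embedding, you follow essentially the paper's line: both arguments rest on the pointwise interpolation inequality $\|v\|_{H^s(I_\vareps)}\le C\|v\|_{L^2(I_\vareps)}^{1-s}\|v\|_{H^1(I_\vareps)}^{s}$ with $C$ uniform in $\vareps$ and $x_1$, coming from Proposition~\ref{equiv_sob_norma_int2}. The paper absorbs the $L^2$ factor back into $H^s$ and cancels, whereas you use Young's inequality; these are equivalent bookkeeping devices.

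For the compactness part, however, your route is genuinely different from the paper's. The paper straightens $\Omega_\vareps$ to the fixed square $Q=(0,1)^2$ via $(x,y)\mapsto(x,yg_\vareps(x))$, checks that this induces continuous maps $H^1(\Omega_\vareps)\to H^1(Q)$ and $L^2(0,1;H^s(0,1))\to L^2(0,1;H^s(0,g_\vareps(x_1)))$, and then invokes an external compactness result for the Bochner embedding $H^1(Q)\hookrightarrow L^2(0,1;H^s(0,1))$ on the fixed domain. Your argument is more self-contained: Rellich--Kondrachov on the (Lipschitz, for fixed $\vareps$) domain $\Omega_\vareps$ gives a subsequence that is Cauchy in $L^2(\Omega_\vareps)$, and the pointwise interpolation inequality combined with H\"older in $x_1$ upgrades this to Cauchy in $L^2(0,1;H^s(I_\vareps))$, since the $H^1$ factor stays bounded. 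This avoids both the change of variables and the external reference, at no real cost; the paper's approach, on the other hand, packages the compactness into a single cited result once the domain has been fixed.
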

\begin{proof}
	For each $x_1\in (0,1)$ and $\vareps>0$, we have by Proposition \ref{equiv_sob_norma_int2} and properties of interpolation spaces that
	\begin{equation*}
	\|u(x_1,\cdot)\|_{H^s(0,g_\vareps(x_1))}\leq C \|u(x_1,\cdot)\|_{H_{[]}^s(0,g_\vareps(x_1))}\leq C\|u(x_1,\cdot)\|_{L^2(0,g_\vareps(x_1))}^{1-s}\|u(x_1,\cdot)\|^s_{H^1(0,g_\vareps(x_1))}
	\end{equation*}
	where $C>0$ is independent of $\vareps>0$ and $x_1\in(0,1)$.
	It follows that
	\begin{equation*}
	\|u(x_1,\cdot)\|_{H^s(0,g_\vareps(x_1))}\leq C\|u(x_1,\cdot)\|_{H^s(0,g_\vareps(x_1))}^{1-s}\|u(x_1,\cdot)\|^s_{H^1(0,g_\vareps(x_1))},
	\end{equation*}
	and then
	\begin{equation*}
	\|u(x_1,\cdot)\|^s_{H^s(0,g_\vareps(x_1))}\leq C\|u(x_1,\cdot)\|^s_{H^1(0,g_\vareps(x_1))}.
	\end{equation*}
	
	Consequently, if we integrate in $x_1\in(0,1)$ 
	\begin{align*}
	\|u\|^2_{L^2(0,1;H^s(0,g_\vareps(x_1)))}&=\int_0^1\|u(x_1,\cdot)\|^2_{H^s(0,g_\vareps(x_1))}dx_1\leq C\int_0^1\|u(x_1,\cdot)\|^2_{H^1(0,g_\vareps(x_1))}dx_1 \\ & = C\|u\|^2_{L^2(0,1;H^1(0,g_\vareps(x_1)))}\leq C\|u\|^2_{H^1(\Omega_\vareps)},
	\end{align*}
	concluding the first statement. To prove the last one, let us consider $Q=(0,1)^2$ and the function 
	\begin{align*}
	\phi_\vareps : & \ Q \to \Omega_\vareps : (x,y) \mapsto (x_1,x_2)=\phi_\vareps(x,y):=(x,yg_\vareps(x)).
	\end{align*}
	
	Consequently, we can set 
	\begin{align*}
	\Phi_\vareps : H^1&(\Omega_\vareps)\to H^1(Q) \\ & u \mapsto \Phi_\vareps(u):=u\circ \phi_\vareps
	\end{align*}
	and 
	\begin{align*}
	\Psi_\vareps : L^2(0,1;&H^s(0,1))\to L^2(0,1;H^s(0,g_\vareps(x_1))) \\ & u \mapsto \Psi_\vareps(u):=u\circ \phi^{-1}_\vareps.
	\end{align*}
	
	It is not difficult to see that $\Phi$ and $\Psi$ are continuous and satisfy  
	\begin{equation*}\label{aux11}
	\|\Phi_\vareps(u)\|_{H^1(Q)}\leq C_1\|u\|_{H^1(\Omega_\vareps)} 
	\quad \textrm{ and } \quad 
	\|\Psi_\vareps(u)\|_{L^2(0,1;H^s(0,g_\vareps(x_1)))}\leq C_2\|u\|_{L^2(0,1;H^s(0,1))}
	\end{equation*}
	for every $0<s<1$, and constants $C_1,C_2>0$ independents of $\vareps$.
	
	Hence, we can use \cite[Proposition 3.57]{doina} to obtain that the inclusion
	\begin{equation*}
	H^1(Q)\hookrightarrow L^2(0,1;H^s(0,1))
	\end{equation*}
	is compact.	
	Thus, we have the following chain
	\begin{equation*}
	H^1(\Omega_\vareps)\stackrel{\Phi_\vareps}{\hookrightarrow}H^1(Q)\hookrightarrow L^2(0,1;H^s(0,1))\stackrel{\Psi_\vareps}{\hookrightarrow}L^2(0,1;H^s(0,g_\vareps(x_1))),	
	\end{equation*}
	that implies the compact immersion.
\end{proof}

\subsection{Concentrated integrals}

Finally, we consider here what we call concentrated integrals. 

\begin{theorem}\label{lema_int_conc}
	For $\vareps_0>0$ sufficiently small, there is a constant $C>0$, independent of $\vareps\in(0,\vareps_0)$ and $u^\vareps\in H^1(\Omega_\vareps)$, such that, for all $1/2<s\leq1$,
	\begin{equation}\label{1}
	\ffrac{1}{\vareps}\int_{\theta_\vareps}|u^\vareps|^q\leq C \|u^\vareps\|^q_{L^q(0,1;H^s(0,g_\varepsilon(x_1)))}, \quad  \forall q\geq 1,
	\end{equation}
	and
	\begin{align}\label{2}
	\ffrac{1}{\vareps}\int_{\theta_\vareps}|u^\vareps|^2\leq  C \left(\|u^\vareps\|_{H^s(\Omega_\vareps)}^{2}+\left\|\ffrac{\partial u^\vareps}{\partial x_2}\right\|^2_{L^2(\Omega_\vareps)}\right).
	\end{align}
	
	In particular,
	\begin{equation}\label{3}
	\ffrac{1}{\vareps}\int_{\theta_\vareps}|u^\vareps|^2\leq C \|u^\vareps\|^2_{H^1(\Omega_\vareps)}.
	\end{equation}
\end{theorem}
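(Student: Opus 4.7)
The plan is to prove (1) first, via a slice-wise 1D Sobolev embedding into $L^\infty$, and then deduce (2) and (3) from (1) by interpolation in the $x_2$-direction.

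For (1), I fix $x_1 \in (0,1)$ and work on the cross-section $I_\vareps = (0,g_\vareps(x_1))$. Since $1/2 < s \le 1$, the one-dimensional Sobolev embedding $H^s(\R) \hookrightarrow C_b(\R)$ holds with a universal constant, so composing with the extension operator $P$ supplied by Lemma \ref{ext_sob_uni} yields
\begin{equation*}
\|u^\vareps(x_1,\cdot)\|_{L^\infty(I_\vareps)} \le \|Pu^\vareps(x_1,\cdot)\|_{L^\infty(\R)} \le C\,\|u^\vareps(x_1,\cdot)\|_{H^s(I_\vareps)},
\end{equation*}
with $C$ independent of $\vareps$ and $x_1$ (this is the decisive point, and where Lemma \ref{ext_sob_uni} is essential). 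The cross-section of $\theta_\vareps$ at level $x_1$ has length $\vareps h_\vareps(x_1) \le \vareps h_1$, so
\begin{equation*}
\int_{g_\vareps(x_1)-\vareps h_\vareps(x_1)}^{g_\vareps(x_1)} |u^\vareps(x_1,x_2)|^q\,dx_2 \le \vareps h_1\,\|u^\vareps(x_1,\cdot)\|_{L^\infty(I_\vareps)}^q \le C\vareps\,\|u^\vareps(x_1,\cdot)\|_{H^s(I_\vareps)}^q.
\end{equation*}
Dividing by $\vareps$ and integrating over $x_1 \in (0,1)$ gives \eqref{1}.

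For (2) with $1/2 < s < 1$, I apply \eqref{1} with $q=2$ and then bound the Lebesgue--Bochner norm by interpolation in the fibre variable. Using Proposition \ref{equiv_sob_norma_int2} together with the standard interpolation inequality $\|v\|_{H^s_{[]}(I_\vareps)} \le \|v\|_{L^2(I_\vareps)}^{1-s}\|v\|_{H^1(I_\vareps)}^s$, I obtain
\begin{equation*}
\|u^\vareps(x_1,\cdot)\|_{H^s(I_\vareps)}^2 \le C\,\|u^\vareps(x_1,\cdot)\|_{L^2(I_\vareps)}^{2(1-s)}\|u^\vareps(x_1,\cdot)\|_{H^1(I_\vareps)}^{2s},
\end{equation*}
with constants uniform in $\vareps$ thanks to Lemma \ref{ext_sob_uni}. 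Expanding $\|u^\vareps(x_1,\cdot)\|_{H^1(I_\vareps)}^2 = \|u^\vareps(x_1,\cdot)\|_{L^2(I_\vareps)}^2 + \|\partial_{x_2}u^\vareps(x_1,\cdot)\|_{L^2(I_\vareps)}^2$, using $(a+b)^{s} \le a^{s}+b^{s}$ on the second factor, and applying Young's inequality with exponents $1/(1-s)$ and $1/s$ to the cross term, I reduce to
\begin{equation*}
\|u^\vareps(x_1,\cdot)\|_{H^s(I_\vareps)}^2 \le C\bigl(\|u^\vareps(x_1,\cdot)\|_{L^2(I_\vareps)}^2 + \|\partial_{x_2}u^\vareps(x_1,\cdot)\|_{L^2(I_\vareps)}^2\bigr).
\end{equation*}
Integrating in $x_1$ and bounding $\|u^\vareps\|_{L^2(\Omega_\vareps)}^2 \le \|u^\vareps\|_{H^s(\Omega_\vareps)}^2$ yields \eqref{2}. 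The endpoint $s=1$ is trivial since $\|u^\vareps\|_{L^2(0,1;H^1(0,g_\vareps(x_1)))}^2 = \|u^\vareps\|_{L^2(\Omega_\vareps)}^2 + \|\partial_{x_2}u^\vareps\|_{L^2(\Omega_\vareps)}^2$.

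Estimate \eqref{3} is then immediate from \eqref{2}, since $\|u^\vareps\|_{H^s(\Omega_\vareps)}^2 + \|\partial_{x_2} u^\vareps\|_{L^2(\Omega_\vareps)}^2 \le 2\|u^\vareps\|_{H^1(\Omega_\vareps)}^2$. The main obstacle throughout is the $\vareps$-uniformity of the constants: the embedding $H^s \hookrightarrow L^\infty$ and the interpolation inequality are both applied on intervals $I_\vareps$ whose length depends on $\vareps$ and on $x_1$, and without the uniform extension operator from Lemma \ref{ext_sob_uni} the embedding constants would in principle degenerate. Once uniformity is secured, everything else is an integration in $x_1$ combined with elementary inequalities.
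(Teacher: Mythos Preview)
Your proof is correct. For \eqref{1} both you and the paper argue slice-wise via the one-dimensional embedding $H^s\hookrightarrow L^\infty$ with an $\vareps$-uniform constant; you secure uniformity by extending each fibre to $\R$ with Lemma~\ref{ext_sob_uni}, whereas the paper instead restricts to a subinterval $(x_2-z^*,x_2)\subset I_\vareps$ of \emph{fixed} length $z^*=g_0-\vareps_0 h_1$ and applies the embedding there. These are equivalent devices for the same idea.

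Your route to \eqref{2}, however, is genuinely different. The paper writes $u(x_1,x_2)=u(x_1,0)+\int_0^{x_2}\partial_{x_2}u$, squares, integrates over $\theta_\vareps$, and controls the boundary term $\|u(\cdot,0)\|_{L^2(0,1)}$ via the two-dimensional trace inequality $H^s(\Omega_0)\to L^2(\{x_2=0\})$ on the fixed rectangle $\Omega_0=(0,1)\times(0,g_0)\subset\Omega_\vareps$. You stay purely one-dimensional, feeding \eqref{1} into fibrewise interpolation (Proposition~\ref{equiv_sob_norma_int2}) and Young's inequality. Your argument is arguably cleaner---it actually yields the stronger intermediate bound $\frac{1}{\vareps}\int_{\theta_\vareps}|u|^2\le C(\|u\|_{L^2(\Omega_\vareps)}^2+\|\partial_{x_2}u\|_{L^2(\Omega_\vareps)}^2)$---and avoids invoking a 2D trace result. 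For \eqref{3} the paper goes directly from \eqref{1} through Proposition~\ref{leb_boch_int} rather than through \eqref{2}, but your path is equally valid.
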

\begin{proof}	
	Take $u\in H^1(\Omega_\vareps)$. In a.e. $x_1\in(0,1)$, we have $u(x_1,\cdot)\in H^1(0,g_\varepsilon(x_1))$. Define
	\begin{equation*}
	z^*:=g_0-\vareps_0 h_1 \text{ and } z^\vareps:=g_\varepsilon(x_1)-\vareps h_\vareps(x_1)
	\end{equation*}
	for $\vareps_0>0$ sufficiently small in such way that, for all $\vareps<\vareps_0$, we have
	\begin{equation*}
	[z^\vareps-z^*,z^\vareps]\subset[0,g_\varepsilon(x_1)].
	\end{equation*}
	
	\begin{figure}[!h]
		\centering\includegraphics[scale=0.46]{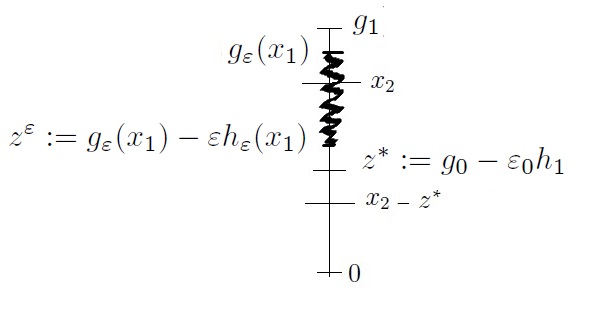}
		\caption[Fiber of the oscillatory domain]{Fixed $x_1\in(0,1)$ and $\varepsilon>0$, we get this fiber to the oscillatory domain for $\varepsilon<\varepsilon_0$.}
		\label{fig:lema}
	\end{figure}
	
	Since $(g_\varepsilon(x_1)-\vareps h_\vareps(x_1))<x_2<g_\varepsilon(x_1)$ and $1/2<s\leq1$, it follows from \cite[Theorem 1.5.1.3]{grisvard} for $n=1$ that exists $K>0$ independent of $\vareps>0$ such that
	\begin{equation*}
	|u(x_1,x_2)|\leq K\|u(x_1,\cdot)\|_{H^s(x_2-z^*,x_2)} \leq K\|u(x_1,\cdot)\|_{H^s(0,g_\varepsilon(x_1))}.
	\end{equation*}
	Indeed, the interval where we are applying the result is fixed and independent of the parameters $\vareps$ and $x_1$.
	
	Hence,
	\begin{align*}
	\ffrac{1}{\vareps}\int_{\theta_\vareps}|u|^q&=\int_{0}^{1}\ffrac{1}{\vareps}\int_{g_\varepsilon(x_1)-\vareps h_\vareps(x_1)}^{g_\varepsilon(x_1)}|u(x_1,x_2)|^q dx_2dx_1 \\ & \leq \int_{0}^{1}\ffrac{1}{\vareps}\int_{g_\varepsilon(x_1)-\vareps h_\vareps(x_1)}^{g_\varepsilon(x_1)}K^q\|u(x_1,\cdot)\|_{H^s(0,g_\varepsilon(x_1))}^q dx_2dx_1 \\ & \leq K^q h_1\int_{0}^{1}\|u(x_1,\cdot)\|_{H^s(0,g_\varepsilon(x_1))}^q dx_1 = C_1\|u\|^q_{L^q(0,1;H^s(0,g_\varepsilon(x_1)))},
	\end{align*}
	where $C_2$ is independent of $\vareps$, proving \eqref{1}.
	
	If $q=2$, due to Proposition \ref{leb_boch_int} and previous inequality, we get 
	\begin{equation*}
	\ffrac{1}{\vareps}\int_{\theta_\vareps}|u|^2\leq C_1\|u\|^2_{L^2(0,1;H^s(0,g_\varepsilon(x_1)))}  \leq C_1C_2\|u\|^2_{H^1(\Omega_{\vareps})}
	\end{equation*}
	proving \eqref{3}.
	
	Now, let us prove \eqref{2}. Here we use that $C^\infty(\Omega_\vareps)$ is dense in $H^1(\Omega_\vareps)$ (see \cite[Theorem 1.4.2.2]{grisvard}).
	Let $u\in C^\infty(\Omega_\vareps)$ and fixed $x_1\in(0,1)$. By Fundamental Theorem of Calculus, we have  
	\begin{equation*}
	u(x_1,x_2)=u(x_1,0)+\int_{0}^{x_2}\ffrac{\partial u}{\partial x_2}(x_1,s)ds.
	\end{equation*}
	
	Then
	\begin{align*}
	|u(x_1,x_2)|^2 &\leq 2|u(x_1,0)|^2+2\left[\left(\int_{0}^{x_2}\left|\ffrac{\partial u}{\partial x_2}(x_1,s)\right|^2ds\right)^{1/2}\left(\int_{0}^{x_2}1^2ds\right)^{1/2}\right]^2 \\ &\leq  2|u(x_1,0)|^2+2g_\varepsilon(x_1)\int_{0}^{x_2}\left|\ffrac{\partial u}{\partial x_2}(x_1,s)\right|^2ds.
	\end{align*}
	
	Consequently,
	\begin{align*}
	\int_{g_\varepsilon(x_1)-\vareps h(x_1,\vareps)}^{g_\varepsilon(x_1)}&|u(x_1,x_2)|^2 dx_2\leq 2\int_{g_\varepsilon(x_1)-\vareps h_\vareps(x_1)}^{g_\varepsilon(x_1)}|u(x_1,0)|^2dx_2\\ & \qquad \quad \qquad +2g_\varepsilon(x_1)\int_{g_\varepsilon(x_1)-\vareps h_\vareps(x_1)}^{g_\varepsilon(x_1)}\left(\int_{0}^{x_2}\left|\ffrac{\partial u}{\partial x_2}(x_1,s)\right|^2ds\right)dx_2 \\ & \leq	2\vareps h_1|u(x_1,0)|^2+2g_1\vareps h_1\int_{0}^{g_\varepsilon(x_1)}\left|\ffrac{\partial u}{\partial x_2}(x_1,x_2)\right|^2dx_2.
	\end{align*}
	
	Hence, if $\gamma(u)$ is the trace of $u$ given by \cite[Theorem 1.5.1.3]{grisvard}, we get 
	\begin{align*}
	\ffrac{1}{\vareps}\int_{\theta_\vareps}|u|^2& = \ffrac{1}{\vareps}\int_{0}^{1}\int_{g_\varepsilon(x_1)-\vareps h_\vareps(x_1)}^{g_\varepsilon(x_1)}|u(x_1,x_2)|^2 dx_2dx_1 \\ & \leq 2 h_1\int_{0}^{1}|u(x_1,0)|^2dx_1 +2g_1h_1\int_{0}^{1}\int_{0}^{g_\varepsilon(x_1)}\left|\ffrac{\partial u}{\partial x_2}(x_1,x_2)\right|^2dx_2dx_1 \\ & \leq 2h_1\left(\|\gamma(u)\|_{L^2(0,1)}^{2}+g_1\left\|\ffrac{\partial u}{\partial x_2}\right\|^2_{L^2(\Omega_\vareps)}\right).
	\end{align*}
	
	On the other hand, if $\Omega_0=(0,1)\times(0,g_0)$, we have $\Omega_0\subset\Omega_\vareps$, and there exists a constant $c>0$ such that $\|\gamma(u)\|_{L^2(0,1)}\leq c\|u\|_{H^s(\Omega_0)}$ for all $1/2<s\leq1$. Then, due to the previous inequality,
	\begin{align*}
	\ffrac{1}{\vareps}\int_{\theta_\vareps}|u|^2\leq  2h_1\left(c\|u\|_{H^s(\Omega_0)}^{2}+g_1\left\|\ffrac{\partial u}{\partial x_2}\right\|^2_{L^2(\Omega_\vareps)}\right)\leq C_1 \left(\|u\|_{H^s(\Omega_\vareps)}^{2}+\left\|\ffrac{\partial u}{\partial x_2}\right\|^2_{L^2(\Omega_\vareps)}\right)
	\end{align*}
	with $C_1$ independent of $\vareps$. 
\end{proof}

\section{Nonlinearities} \label{nonlinear}

In this section, we show some properties to a class of nonlinear maps defined in Sobolev-Bochner spaces. Such applications will define the nonlinearity of our elliptic problems. 

Consider the Sobolev-Bochner spaces 
\begin{equation}
X_\vareps = L^2(0,1;H^s(0,g(x_1,\vareps))), \text{ and their dual } X'_\vareps = L^2(0,1; \{H^{s}(0,g(x_1,\vareps)) \}' ), 
\label{x_e}
\end{equation} 
%{\color{red}  NO HEMOS DEFINIDO LOS ESPACIOS $H^{-s}(0,g(x_1,\vareps))$.  JUSTIFICAR ADEMAS QUE $L^2(0,1;H^{-s}(0,g(x_1,\vareps)))$ ES EL DUAL DE $X_\vareps$  } 

for  $1/2<s<1$, and define
\begin{align}\label{f_e}
F_\vareps : X_\vareps&\to X'_\vareps \nonumber\\ u&\mapsto  F_\vareps(u) : X_\vareps\to \R  \\ & \quad \qquad \qquad v\mapsto\langle F_\vareps(u),v\rangle = \ffrac{1}{\vareps}\int_{\theta_\vareps}f(u)v\nonumber
\end{align}
where $f\in C^2(\R)$ is a bounded function with bounded derivatives (see Remark \ref{limit_f}).
Thus, we have:

\begin{proposition}\label{F_dif1} The function $F_\vareps$ defined in \eqref{f_e} satisfies, with constants independents of $\vareps$:
	\begin{enumerate}[(a)]
		\item there is $K>0$ such that
		\begin{equation*}
		\sup_{u^\varepsilon\in X_\vareps}\|F_\varepsilon(u^\vareps)\|_{X'_\vareps}\leq K;
		\end{equation*}
		
		\item $F_\varepsilon$ is Lipschitz and, therefore, is continuous; in other words there is $L>0$ such that
		\begin{equation*}
		\|F_\varepsilon(u_1^\vareps)-F_\varepsilon(u_2^\vareps)\|_{X'_\vareps}\leq L\|u_1^\vareps-u_2^\vareps\|_{X_\vareps}, \ \forall u_1,u_2\in X_\vareps;
		\end{equation*}
	\end{enumerate}
\end{proposition}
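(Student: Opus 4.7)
\medskip

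\noindent\textbf{Proof proposal.} The plan is to reduce both statements to the concentrated-integral estimate \eqref{1} of Theorem \ref{lema_int_conc} (with $q=2$), whose constant is independent of $\vareps$. Since the Sobolev-Bochner norm of $v\in X_\vareps$ controls $\frac{1}{\vareps}\int_{\theta_\vareps}|v|^2$ uniformly in $\vareps$, the boundedness and Lipschitz bounds will follow by Cauchy--Schwarz once we recall that $f$ and $f'$ are bounded (Remark \ref{limit_f}).

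For part $(a)$, fix $u^\vareps\in X_\vareps$. Write $M=\|f\|_{L^\infty(\R)}<\infty$. For any $v\in X_\vareps$, the pointwise bound $|f(u^\vareps)|\leq M$ and Cauchy--Schwarz yield
\begin{equation*}
|\langle F_\vareps(u^\vareps),v\rangle|\leq \frac{M}{\vareps}\int_{\theta_\vareps}|v|\leq M\,\Bigl(\tfrac{|\theta_\vareps|}{\vareps}\Bigr)^{1/2}\Bigl(\tfrac{1}{\vareps}\int_{\theta_\vareps}|v|^2\Bigr)^{1/2}.
\end{equation*}
Since $|\theta_\vareps|\leq \vareps h_1$ and, by \eqref{1} applied with $q=2$, $\frac{1}{\vareps}\int_{\theta_\vareps}|v|^2\leq C\|v\|_{X_\vareps}^2$ with $C$ independent of $\vareps$, we obtain $\|F_\vareps(u^\vareps)\|_{X_\vareps'}\leq M\sqrt{h_1 C}=:K$ uniformly in $u^\vareps$ and $\vareps$.

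For part $(b)$, set $L_f=\|f'\|_{L^\infty(\R)}<\infty$, so $|f(a)-f(b)|\leq L_f|a-b|$ for all $a,b\in\R$. For $u_1,u_2\in X_\vareps$ and $v\in X_\vareps$,
\begin{equation*}
|\langle F_\vareps(u_1)-F_\vareps(u_2),v\rangle|\leq L_f\cdot\frac{1}{\vareps}\int_{\theta_\vareps}|u_1-u_2||v|\leq L_f\Bigl(\tfrac{1}{\vareps}\int_{\theta_\vareps}|u_1-u_2|^2\Bigr)^{1/2}\Bigl(\tfrac{1}{\vareps}\int_{\theta_\vareps}|v|^2\Bigr)^{1/2}.
\end{equation*}
Applying \eqref{1} with $q=2$ to each factor, both $\bigl(\tfrac{1}{\vareps}\int_{\theta_\vareps}|u_1-u_2|^2\bigr)^{1/2}$ and $\bigl(\tfrac{1}{\vareps}\int_{\theta_\vareps}|v|^2\bigr)^{1/2}$ are bounded by $\sqrt{C}\|u_1-u_2\|_{X_\vareps}$ and $\sqrt{C}\|v\|_{X_\vareps}$ respectively, with the same $C$ from part $(a)$. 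Taking the supremum over $v$ with $\|v\|_{X_\vareps}=1$ gives $\|F_\vareps(u_1)-F_\vareps(u_2)\|_{X_\vareps'}\leq L_f C\|u_1-u_2\|_{X_\vareps}$, so $L:=L_f C$ works uniformly in $\vareps$.

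There is no real obstacle here; the proposition is essentially a direct corollary of Theorem \ref{lema_int_conc}. The only point requiring attention is that the constant in \eqref{1} is $\vareps$-independent (which was the content of that theorem, via the trace-type estimate coming from the restriction $s>1/2$), and that the cut-off of $f$ outside $\{|u|\leq R\}$ provided by Remark \ref{limit_f} legitimizes using $\|f\|_{L^\infty}$ and $\|f'\|_{L^\infty}$ as genuine constants.
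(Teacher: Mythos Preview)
Your proof is correct and follows essentially the same route as the paper: both arguments apply Cauchy--Schwarz to the concentrated integral, bound the $f$-factor via $\|f\|_\infty$ (respectively $\|f'\|_\infty$) and the remaining factor(s) via the estimate \eqref{1} of Theorem~\ref{lema_int_conc}. The only cosmetic difference is that in part~(a) you bound $|f(u^\vareps)|\leq M$ pointwise before applying Cauchy--Schwarz to $1\cdot|v|$, whereas the paper applies Cauchy--Schwarz to $f(u^\vareps)\cdot v$ first and then bounds $\frac{1}{\vareps}\int_{\theta_\vareps}|f(u^\vareps)|^2\leq \|f\|_\infty^2 h_1$; the resulting constants and the logic are the same.
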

\begin{proof}
	\begin{enumerate}[(a)]
		\item For $u^\varepsilon\in X_\vareps$,
		\begin{equation*}
		\|F_\varepsilon(u^\vareps)\|_{X_\vareps'}=\sup_{\|v^\vareps\|_{X_\vareps}=1}|\langle F_\varepsilon(u^\vareps),v^\varepsilon\rangle|.
		\end{equation*}
		
		So, if $v^\varepsilon\in X_\vareps$, using Theorem \ref{lema_int_conc} we have
		\begin{align*}
		|\langle F_\varepsilon(u^\vareps),v^\varepsilon\rangle|&\leq\ffrac{1}{\vareps}\int_{\theta_\vareps}|f(u^\vareps)v^\vareps|\leq \left(\ffrac{1}{\vareps}\int_{\theta_\vareps}|f(u^\vareps)|^2\right)^{1/2}\left(\ffrac{1}{\vareps}\int_{\theta_\vareps}|v^\vareps|^2\right)^{1/2} %\\ & 
		\leq \|f\|_{\infty}h_1^{1/2}C\|v^\vareps\|_{X_\vareps}.
		\end{align*}
		
		Therefore
		\begin{equation*}
		\sup_{u^\varepsilon\in X_\vareps}\|F_\varepsilon(u^\vareps)\|_{X_\vareps'}\leq \|f\|_{\infty}h_1^{1/2}C \leq K
		\end{equation*}
		
		\item Indeed, if $u_1^\vareps,u_2^\vareps\in X_\vareps$ then
		\begin{align*}
		\|F_\vareps(u^\vareps_1)-F_\vareps(u_2^\vareps)\|_{X_\vareps'}= \sup_{\|v^\vareps\|_{X_\vareps}=1}|\langle F_\vareps(u^\vareps_1),v^\vareps\rangle-\langle F_\vareps(u^\vareps_2),v^\vareps\rangle|
		\end{align*}
		Using Theorem \ref{lema_int_conc},
		\begin{align*}
		|\langle F_\vareps(u_1^\vareps),v^\vareps\rangle-\langle &F_\vareps(u_2^\vareps),v^\vareps\rangle|=|\langle F_\vareps(u_1^\vareps)-F_\vareps(u_2^\vareps),v^\vareps\rangle| \leq \ffrac{1}{\vareps}\int_{\theta_\vareps}|(f(u_1^\vareps)-f(u_2^\vareps))v^\vareps|\\ & \leq \left(\ffrac{1}{\vareps}\int_{\theta_\vareps}|f(u_1^\vareps)-f(u_2^\vareps)|^2\right)^{1/2}\left(\ffrac{1}{\vareps}\int_{\theta_\vareps}|v^\vareps|^2\right)^{1/2}\\ & \leq \|f'\|_{\infty}C^2\|u_1^\vareps-u_2^\vareps\|_{X_\vareps}\|v^\vareps\|_{X_\vareps}
		\end{align*}
		Thus
		\begin{equation*}
		\|F_\vareps(u_1^\vareps)-F_\vareps(u_2^\vareps)\|_{X_\vareps'}\leq \|f'\|_{\infty}C^2\|u_1^\vareps-u_2^\vareps\|_{X_\vareps}
		\end{equation*}
		\noindent and, therefore, $F_\vareps$ is Lipschitz with constant independent of $\vareps$.
	\end{enumerate}
\end{proof}

\section{Upper and lower semicontinuity} \label{mainsec}

In this section, we prove the main result passing to the limit in problem \eqref{inicio}.
First, we write equations \eqref{limite} and \eqref{inicio} in an abstract way. Next, we combine the results from the previous sections with those ones from \cite{arrieta_simone_lips,dumb_1} concerned with compact convergence to obtain upper and lower semicontinuity to $\E_{\vareps,R}$ at $\vareps=0$.   

\subsection{Abstract setting and existence of solutions}

In order to write problem \eqref{inicio} in an abstract way, we consider the linear operator
\begin{align*}
A_\vareps &: D(A_\vareps)\subset L^2(\Omega_\vareps)\to L^2(\Omega_\vareps)\\ &\qquad \qquad u^\vareps\longmapsto A_\vareps u^\vareps:= -\ffrac{\partial^2 u^\vareps}{\partial x_1^2}-\ffrac{1}{\vareps^2}\ffrac{\partial^2 u^\vareps}{\partial x_2^2}+u^\vareps,
\end{align*}
with $D(A_\vareps):=\left\{u^\vareps\in H^2(\Omega_\vareps); \ \ffrac{\partial u^\vareps}{\partial x_1}N_1+\ffrac{1}{\vareps^2}\ffrac{\partial u^\vareps}{\partial x_2}N_2=0\right\}$.

Let $Z_\vareps^0=L^2(\Omega_\vareps)$, $Z_\vareps^1=D(A_\vareps)$ and consider the scale of Hilbert spaces $Z_\vareps^\alpha$ constructed by complex interpolation between $Z_\vareps^0$ and $Z_\vareps^1$. In our context, such spaces isometrically coincide with the fractional power space $A_\vareps^\alpha$ of the operator $A_\vareps$ (see \cite[Theorem 16.1]{yagi}). Such scale can be extended to negative exponents taking $Z^{-\alpha}_\vareps = (Z_\varepsilon^\alpha)'$ for $\alpha>0$. Notice that $Z_\vareps^{1/2}=H^1_\vareps(\Omega_{\vareps})$ 
%endowed with the norm 
%\begin{equation*}
%\|u^\vareps\|^2_{H^1_\vareps(\Omega_{\vareps})}=\|u^\vareps\|^2_{L^2(\Omega_{\vareps})}+\left\|\ffrac{\partial u^\vareps}{\partial x_1}\right\|^2_{L^2(\Omega_{\vareps})}+\ffrac{1}{\vareps^2}\left\|\ffrac{\partial u^\vareps}{\partial x_2}\right\|^2_{L^2(\Omega_{\vareps})}.
%\end{equation*}
and $Z_\varepsilon^{-1/2}=(H^1_\vareps(\Omega_\vareps))'$
where $H^1_\vareps(\Omega_{\vareps})$ is the space $H^1(\Omega_{\vareps})$ endowed with the equivalent norm
\begin{equation*}
\|u^\vareps\|^2_{H^1_\vareps(\Omega_{\vareps})}=\|u^\vareps\|^2_{L^2(\Omega_{\vareps})}+\left\|\ffrac{\partial u^\vareps}{\partial x_1}\right\|^2_{L^2(\Omega_{\vareps})}+\ffrac{1}{\vareps^2}\left\|\ffrac{\partial u^\vareps}{\partial x_2}\right\|^2_{L^2(\Omega_{\vareps})}.
\end{equation*}

Then, if we consider the realizations of $A_\vareps$ in this scale, we obtain $A_{\varepsilon,-1/2}\in \mathcal{L}(Z_\varepsilon^{1/2},Z_\varepsilon^{-1/2})$ with
\begin{equation*}
\langle A_{\varepsilon,-1/2} \ u^\vareps,\varphi^\vareps \rangle = \int_{\Omega_\vareps}\ffrac{\partial u^\vareps}{\partial x_1}\ffrac{\partial \varphi^\vareps}{\partial x_1}+\frac{1}{\vareps^2}\ffrac{\partial u^\vareps}{\partial x_2}\ffrac{\partial \varphi^\vareps}{\partial x_2}  + u^\varepsilon\varphi^\varepsilon, \quad \forall \varphi^\vareps\in H^1(\Omega_\vareps).
\end{equation*} 
With some abuse of notation, we identify all different realizations of this operator writing them as $A_\vareps$. Then  problem \eqref{inicio} can be rewrite as
\begin{equation}\label{caso_se_a}
A_\vareps u^\varepsilon=F_\varepsilon(u^\vareps),
\end{equation}
where the map $F_\varepsilon$ is given by
\begin{align*}
F_\vareps : L^2(0,1;H^s(0,g_\vareps(x)))&\to L^2(0,1; \{ H^{s}(0,g_\vareps(x)) \}') \nonumber\\ u^\vareps&\mapsto  F_\vareps(u^\vareps) : L^2(0,1;H^s(0,g_\vareps(x)))\to \R  \\ & \qquad \qquad \qquad v^\vareps\mapsto\langle F_\vareps(u^\vareps),v^\vareps\rangle = \ffrac{1}{\vareps}\int_{\theta_\vareps}f(u^\vareps)v^\vareps,\nonumber
\end{align*}
with $1/2<s<1$.

Thus $u^\varepsilon\in H^1(\Omega_\vareps)$ is a solution of \eqref{caso_se_a} if, and only if, $u^\varepsilon=A_\varepsilon^{-1}F_\varepsilon(u^\vareps)$. Then $u^\vareps\in H^1(\Omega_\vareps)$ must be a fixed point to $A_\varepsilon^{-1}F_\varepsilon|_{H^1(\Omega_{\vareps})} : H^1(\Omega_\vareps)\to H^1(\Omega_\vareps)$. The existence of such solutions follows from Schaefer Fixed Point Theorem \cite[Section 9.2.2, Theorem 4]{evans}.

In a similar way, we can analyze the limit problem given by \eqref{limite}.
We first consider $X_0 = L^2(0,1)$ with the norm $\|u\|^2_{X_0} = \mu_g\|u\|^2_{L^2(0,1)}$ and, then, the linear operator
\begin{align*}
A_0 : D(&A_0)\subset X_0\to X_0 \\ u &\mapsto A_0 u = -q_0u_{xx} + u,
\end{align*}
with $D(A_0) = \{u\in H^2(0,1);u'(0)=u'(1)=0 \}$. Next, we introduce the fractional power spaces, and set the nonlinearity  
\begin{align*}
F_0 : X_0&\to L^2(0,1) \nonumber\\ u&\mapsto  F_0(u) : L^2(0,1)\to \R  \\ & \qquad \qquad \qquad v\mapsto\langle F_0(u),v\rangle = \int_0^1\mu_hf_0(u)dx,
\end{align*}
where $q_0$ and $f_0$ are given in \eqref{q_0}.

Consequently, the limit problem \eqref{limite} can be rewritten as
\begin{equation}\label{caso_se_l}
A_0 u=F_0(u)
\end{equation}
and then, $u\in H^1(\Omega)$ is a solution of \eqref{caso_se_l} if, and only if, $u=A_0^{-1}F_0(u)$. Thus, $u\in H^1(0,1)$ is a fixed point to $A_0^{-1}F_0 : H^1(0,1)\to H^1(0,1)$. The existence of a solution also follows from Schauder's Fixed Point Theorem.

\subsection{Extension operator}

Now, we consider a continuous extension linear operator that will be useful in our situation.
More precisely, from \cite[Lemma 3.1]{arr_marcone_artigo_base} we have

\begin{lemma}
	\label{lema_ext}
	If $\Omega_\vareps = \{(x_1,x_2); \ 0<x_1<1, \ 0<x_2<g_\vareps(x_1)\}$ and $\Omega = \{(x_1,x_2); \ 0<x_1<1, \ 0<x_2<g_1\}$, then there exist a constant $K>0$, independent of $\vareps$ and $p$, and an extension operator
	$$P_\vareps\in \mathcal{L}(L^p(\Omega_\vareps),L^p(\Omega))\cap \mathcal{L}(W^{1,p}(\Omega_\vareps),W^{1,p}(\Omega)) \cap \mathcal{L}(W^{1,p}_{\partial_l}(\Omega_\vareps),W^{1,p}_{\partial_l}(\Omega))$$
	\noindent(where $W^{1,p}_{\partial_l}$ is the set of functions in $W^{1,p}$ that vanish in the domain's lateral boundary) such that
	\begin{align*}
	\|P_\vareps\varphi^\vareps\|_{L^p(\Omega)}&\leq K \|\varphi^\vareps\|_{L^p(\Omega_\vareps)}, \\
	\left\|\ffrac{\partial P_\vareps\varphi^\vareps}{\partial x_1}\right\|_{L^p(\Omega)}\leq K &\left(\left\|\ffrac{\partial \varphi^\vareps}{\partial x_1}\right\|_{L^p(\Omega_\vareps)} +\eta(\vareps)\left\|\ffrac{\partial \varphi^\vareps}{\partial x_2}\right\|_{L^p(\Omega_\vareps)}\right), \\
	\left\|\ffrac{\partial P_\vareps\varphi^\vareps}{\partial x_2}\right\|_{L^p(\Omega)}&\leq K \left\|\ffrac{\partial \varphi^\vareps}{\partial x_2}\right\|_{L^p(\Omega_\vareps)},
	\end{align*}
	\noindent for all $\varphi^\vareps\in W^{1,p}(\Omega_\vareps)$, with $1\leq p \leq\infty$ and $\eta(\vareps)=\sup{|g'_\vareps(x_1)}|$.
\end{lemma}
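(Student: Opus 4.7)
The plan is to construct $P_\vareps$ by reflection across the graph $\{x_2 = g_\vareps(x_1)\}$, mimicking the one-dimensional argument of Lemma \ref{ext_sob_uni} adapted to two variables. In the easy case $g_1 \leq 2g_0$, for $\varphi^\vareps \in W^{1,p}(\Omega_\vareps)$ I would set
$$(P_\vareps \varphi^\vareps)(x_1,x_2) = \begin{cases} \varphi^\vareps(x_1,x_2), & 0<x_2\leq g_\vareps(x_1), \\ \varphi^\vareps(x_1,\, 2g_\vareps(x_1)-x_2), & g_\vareps(x_1)<x_2<g_1. \end{cases}$$
Under $g_1 \leq 2g_0$ one has $2g_\vareps(x_1)-x_2 \in (0, g_\vareps(x_1))$ whenever $x_2\in(g_\vareps(x_1),g_1)$, so the formula is well defined; the change of variables $y_2 = 2g_\vareps(x_1)-x_2$ has unit Jacobian in $x_2$, giving $\|P_\vareps\varphi^\vareps\|_{L^p(\Omega)}^p \leq 2\|\varphi^\vareps\|_{L^p(\Omega_\vareps)}^p$ with the constant $2^{1/p}\leq 2$ uniform in $p$. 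The chain rule on the upper region yields
$$\partial_{x_2}(P_\vareps\varphi^\vareps) = -(\partial_{x_2}\varphi^\vareps)\circ R, \qquad \partial_{x_1}(P_\vareps\varphi^\vareps) = (\partial_{x_1}\varphi^\vareps)\circ R + 2 g'_\vareps(x_1)\,(\partial_{x_2}\varphi^\vareps)\circ R,$$
with $R(x_1,x_2) = (x_1,\, 2g_\vareps(x_1)-x_2)$; applying the same substitution gives the three displayed estimates, and $\eta(\vareps)=\sup|g'_\vareps|$ arises solely in the $\partial_{x_1}$ bound via the factor $2g'_\vareps(x_1)$.

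For the general case $g_1 > 2g_0$, I would iterate the vertical reflection exactly as in Lemma \ref{ext_sob_uni}: first extend $\varphi^\vareps$ downward by successive reflections across $x_2=0,\,-g_0,\,-2g_0,\ldots,-(n-1)g_0$ until $n g_0 > g_1$. Each of these reflections acts purely in $x_2$ (the lines of reflection are horizontal and independent of $x_1$), so it introduces no $g'_\vareps$-type factor and multiplies the relevant norms only by a uniform constant. Once the enlarged function lives on $(0,1)\times(-n g_0, g_\vareps(x_1))$, the single upward reflection across $\{x_2=g_\vareps(x_1)\}$ of the previous paragraph now stays inside the enlarged vertical range, producing a function on a strip strictly containing $\Omega$. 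Multiplying by a fixed cutoff $\psi(x_2)\in C_c^\infty(\R)$ with $\psi\equiv 1$ on $[0,g_1]$ and $\psi\equiv 0$ outside $(-g_0, g_1+g_0)$ restricts it to $\Omega$; since $\psi$ depends only on $x_2$ and is independent of $\vareps$, it preserves the structure of the three estimates up to a uniform constant and, crucially, does not worsen the $\partial_{x_1}$ bound.

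Finally, because every operation (each reflection and the cutoff) depends only on $x_2$, the trace of $P_\vareps\varphi^\vareps$ on the lateral boundary $\{x_1=0\}\cup\{x_1=1\}$ coincides with the trace of $\varphi^\vareps$; this delivers the third intersection $\mathcal{L}(W^{1,p}_{\partial_l}(\Omega_\vareps),W^{1,p}_{\partial_l}(\Omega))$ essentially for free. The main delicate point — and the only step that is not routine change-of-variables bookkeeping — is to verify that the piecewise-reflected function is globally weakly differentiable on $\Omega$, not merely $W^{1,p}$ on each piece separated by the Lipschitz interface $\{x_2=g_\vareps(x_1)\}$. This holds because the two pieces share the same trace on this interface (the reflection is the identity there), so the distributional derivative carries no singular part supported on the graph; carefully tracking constants through the whole construction then yields a single $K$ independent of $\vareps\in(0,\vareps_0)$ and $p\in[1,\infty]$, as claimed.
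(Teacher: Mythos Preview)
The paper does not actually prove this lemma: it is quoted verbatim from \cite[Lemma~3.1]{arr_marcone_artigo_base}, and the text preceding it says ``from \cite[Lemma~3.1]{arr_marcone_artigo_base} we have'' with no further argument. Your reflection construction is precisely the one carried out in that reference, so your proposal is correct and matches the intended proof.

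Two small remarks. First, your phrase ``every operation \dots depends only on $x_2$'' is slightly inaccurate for the upward reflection $R(x_1,x_2)=(x_1,2g_\vareps(x_1)-x_2)$, which does depend on $x_1$; what you really use (and what suffices) is that $R$ fixes the $x_1$-coordinate, so traces on the lateral segments $\{x_1=0\}$ and $\{x_1=1\}$ are preserved. Second, the cutoff $\psi(x_2)$ in the last step is superfluous here: you only need the extension on $\Omega=(0,1)\times(0,g_1)$, and after the iterated downward reflections followed by the single upward reflection the function is already defined on a strip containing $\Omega$, so plain restriction suffices. Neither point affects the validity of the argument.
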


This operator will play an important role in the convergence analysis since it extends the functions defined in the perturbed domain $\Omega_\vareps$ into the fixed one $\Omega$ in an appropriate way. One important property of this extension operator is the following.

\begin{proposition}\label{prop_norma}
 If $\|u\|_{H^1_\vareps(\Omega_\vareps)}\leq K$, with $K>0$ independent of $\vareps$, then $\|P_\vareps u\|_{H^1(\Omega)}$ is uniformly bounded and we can extract a subsequence (still denoted by $\vareps$) such that 
 \begin{equation*}
	P_\vareps u^\vareps\to u_0 \text{ in } L^2(\Omega), \quad P_\vareps u^\vareps\rightharpoonup u_0 \text{ in } H^1(\Omega), \quad \ffrac{\partial P_\vareps u^\vareps}{\partial x_2}\to 0 \text{ in } L^2(\Omega),
	\end{equation*}
	\noindent for some $u_0\in H^1(0,1)$, where $P_\varepsilon$ is the extension operator from Lemma \ref{lema_ext}.
In particular, if $1/2 < s < 1$,
$$
P_\vareps u^\vareps \to u_0, \quad \textrm{ in } X = L^2(0,1;H^s(0,g_1)).
$$ 
\end{proposition}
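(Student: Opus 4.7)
The plan is to combine the estimates of Lemma \ref{lema_ext} with the bounded-derivative assumption on $g$ to show uniform $H^1(\Omega)$ boundedness of $P_\vareps u^\vareps$ and smallness of its $x_2$-derivative, then extract the subsequence via weak/compact arguments, and finally upgrade the convergence to $X$ via Proposition \ref{leb_boch_int}.

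\textbf{Step 1 (uniform bounds).} From the definition of $\|\cdot\|_{H^1_\vareps(\Omega_\vareps)}$, the hypothesis yields
\begin{equation*}
\|u^\vareps\|_{L^2(\Omega_\vareps)}\le K,\qquad \Bigl\|\tfrac{\partial u^\vareps}{\partial x_1}\Bigr\|_{L^2(\Omega_\vareps)}\le K,\qquad \Bigl\|\tfrac{\partial u^\vareps}{\partial x_2}\Bigr\|_{L^2(\Omega_\vareps)}\le K\vareps.
\end{equation*}
Since $g$ has bounded derivative, $\eta(\vareps)=\sup|g_\vareps'(x_1)|=\sup|g'(x_1/\vareps)|/\vareps\le C/\vareps$. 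Plugging these into Lemma \ref{lema_ext} gives $\|P_\vareps u^\vareps\|_{L^2(\Omega)}\le CK$, $\|\partial_{x_1}P_\vareps u^\vareps\|_{L^2(\Omega)}\le K(K+(C/\vareps)\cdot K\vareps)\le C'K$, and crucially
\begin{equation*}
\Bigl\|\tfrac{\partial P_\vareps u^\vareps}{\partial x_2}\Bigr\|_{L^2(\Omega)}\le K\cdot K\vareps\longrightarrow 0.
\end{equation*}
So $\{P_\vareps u^\vareps\}$ is bounded in $H^1(\Omega)$ and its $x_2$-derivative converges strongly to $0$ in $L^2(\Omega)$.

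\textbf{Step 2 (extraction and identification of $u_0$).} By reflexivity of $H^1(\Omega)$ and the compact embedding $H^1(\Omega)\hookrightarrow L^2(\Omega)$ (Rellich--Kondrachov), there is a subsequence and $u_0\in H^1(\Omega)$ with $P_\vareps u^\vareps\rightharpoonup u_0$ in $H^1(\Omega)$ and $P_\vareps u^\vareps\to u_0$ in $L^2(\Omega)$. The weak convergence in $H^1(\Omega)$ gives $\partial_{x_2}P_\vareps u^\vareps\rightharpoonup \partial_{x_2}u_0$ in $L^2(\Omega)$, but Step 1 shows this sequence converges strongly to $0$. Uniqueness of the weak limit forces $\partial_{x_2}u_0=0$ in $L^2(\Omega)$, so $u_0$ is independent of $x_2$ and can be identified with a function of $H^1(0,1)$.

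\textbf{Step 3 (convergence in $X$).} For $1/2<s<1$, the same argument used in Proposition \ref{leb_boch_int} (applied to the fixed rectangle $\Omega=(0,1)\times(0,g_1)$, which is the case $g_\vareps\equiv g_1$) shows that $H^1(\Omega)\hookrightarrow L^2(0,1;H^s(0,g_1))=X$ is compact. Since $\{P_\vareps u^\vareps\}$ is bounded in $H^1(\Omega)$, a further subsequence converges strongly in $X$; its limit necessarily coincides with $u_0$ by uniqueness (since strong $X$-convergence implies $L^2(\Omega)$-convergence). This yields $P_\vareps u^\vareps\to u_0$ in $X$, completing the proof.

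The only subtle point is the control of the $x_1$-derivative of the extension: the loss factor $\eta(\vareps)\sim 1/\vareps$ appearing in Lemma \ref{lema_ext} is exactly compensated by the gain $\|\partial_{x_2}u^\vareps\|_{L^2(\Omega_\vareps)}=O(\vareps)$ coming from the rescaled norm $H^1_\vareps$. Everything else is a direct application of standard weak-compactness and compact embedding arguments.
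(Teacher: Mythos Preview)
Your proof is correct and follows essentially the same route as the paper's: use Lemma \ref{lema_ext} together with $\eta(\vareps)\le C/\vareps$ and $\|\partial_{x_2}u^\vareps\|_{L^2(\Omega_\vareps)}\le K\vareps$ to get uniform $H^1(\Omega)$ bounds and $\partial_{x_2}P_\vareps u^\vareps\to 0$, extract a weak $H^1$/strong $L^2$ subsequence, identify $u_0$ as independent of $x_2$, and upgrade to $X$ via the compact embedding of Proposition \ref{leb_boch_int}. The only cosmetic difference is that the paper verifies $\partial_{x_2}u_0=0$ by testing against $C_0^\infty(\Omega)$ functions, whereas you invoke uniqueness of weak limits directly; both are equivalent one-line arguments.
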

\begin{proof}
	In fact, since $\|u^\varepsilon\|_{H^1_\varepsilon(\Omega_\vareps)}\leq C$ we have
	\begin{equation*}
	\|u^\vareps\|_{L^2(\Omega)} \leq C, \quad  \left\|\ffrac{\partial u^\vareps}{\partial x_1}\right\|_{L^2(\Omega)} \leq C  \text{ and } \left\|\ffrac{\partial u^\vareps}{\partial x_2}\right\|_{L^2(\Omega)} \leq C\vareps.
	\end{equation*}
	
	Using Lemma \ref{lema_ext},
	\begin{align*}
	\|P_\vareps u^\vareps\|_{H^1(\Omega)}^2&=\|P_\vareps u^\vareps\|_{L^2(\Omega)}^2 + \left\|\ffrac{\partial P_\vareps u^\vareps}{\partial x_1}\right\|^2_{L^2(\Omega)}+\left\|\ffrac{\partial P_\vareps u^\vareps}{\partial x_2}\right\|^2_{L^2(\Omega)} \\ &\leq K\left[ \|u^\vareps\|^2_{L^2(\Omega_\vareps)} + \left(\left\|\ffrac{\partial u^\vareps}{\partial x_1}\right\|^2_{L^2(\Omega_\vareps)} +\eta^2(\vareps)\left\|\ffrac{\partial u^\vareps}{\partial x_2}\right\|^2_{L^2(\Omega_\vareps)}\right) + \left\|\ffrac{\partial u^\vareps}{\partial x_2}\right\|^2_{L^2(\Omega_\vareps)}\right]	
	\end{align*}
	
	Since $g$ has bounded derivative, it follows
	\begin{equation*}
	\eta(\vareps)=\ffrac{1}{\varepsilon}\sup|g'(x_1/\vareps)|\leq \ffrac{C}{\vareps},
	\end{equation*}
	and then, for $0<\vareps<1$ there is $M>0$ independent of $\varepsilon>0$ such that
	\begin{align*}
	\|P_\vareps u^\vareps\|_{H^1(\Omega)} \leq \bar{C}\|u^\vareps\|^2_{H^1_\vareps(\Omega_\vareps)}\leq M.
	\end{align*}

	Consequently,
	\begin{align*}
	\|P_\vareps u^\vareps\|_{L^2(\Omega)}\leq M, \ \left\|\ffrac{\partial P_\vareps u^\vareps}{\partial x_1}\right\|_{L^2(\Omega)} \leq M, \  \left\|\ffrac{\partial P_\vareps u^\vareps}{\partial x_2}\right\|_{L^2(\Omega)}\leq \vareps M.
	\end{align*}
	
	Thus, there exist a subsequence $P_\vareps u^\varepsilon$ and a function $u_0\in H^1(\Omega)$ such that
	\begin{equation*}
	P_\vareps u^\vareps\to u_0 \text{ in } L^2(\Omega), \quad P_\vareps u^\vareps\rightharpoonup u_0 \text{ in } H^1(\Omega), \quad \ffrac{\partial P_\vareps u^\vareps}{\partial x_2}\to 0 \text{ in } L^2(\Omega).
	\end{equation*}
	Further, from Proposition \ref{leb_boch_int}, we have $P_\vareps u^\vareps \to u_0$ in $X=L^2(0,1;H^s(0,g_1))$.  
	
	It follows that $u_0(x_1,x_2) = u_0(x_1)$ in $\Omega$, in particular, 
	\begin{equation*}
	\ffrac{\partial u_0}{\partial x_2}(x_1,x_2)=0 \ \text{ a.e. in } \Omega.
	\end{equation*}
	Indeed, for all $\varphi\in C_0^\infty(\Omega)$, we have
	\begin{align*}
	\int_{\Omega}\ffrac{\partial u_0}{\partial x_2}&\varphi dx_1dx_2 = -\int_{\Omega}u_0\ffrac{\partial \varphi}{\partial x_2} dx_1dx_2 = %\\ & = 
	- \lim_{\vareps\to 0}\int_{\Omega}(P_\vareps u^\vareps)\ffrac{\partial \varphi}{\partial x_2} dx_1dx_2  = - \lim_{\vareps\to 0}\int_{\Omega}\ffrac{\partial P_\vareps u^\vareps}{\partial x_2}\varphi dx_1dx_2=0.
	\end{align*}
	Furthermore, since $u_0\in H^1(\Omega)$ and $u_0(x,y)$ does not depend on the second variable, we may regard $u_0(x,y)=u_0(x)$ and  $u_0\in H^1(0,1)$, concluding the proof.
\end{proof}

\subsection{Continuity of the equilibria set}
In this section, we show our main result. First, let us collect some lemmas and propositions in order to achieve our goal.

\begin{lemma}\label{lema5.2}
	Let $u^\vareps\in H^1(\Omega_\vareps)$ and denote by $w^\vareps\in H^1(\Omega_{\vareps})$  the function $w^\vareps= A_\vareps^{-1}F_\vareps(u^\vareps)$. 
	Then $\|w^\vareps\|_{H^1(\Omega_{\vareps})}\leq C$ for some $C>0$ independent of $\vareps$.
	\end{lemma}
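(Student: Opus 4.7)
The plan is to exploit that $w^\vareps = A_\vareps^{-1}F_\vareps(u^\vareps)$ satisfies a variational identity whose left-hand side is precisely the squared $H^1_\vareps$-norm, and then use Theorem~\ref{lema_int_conc} to control the concentrated right-hand side by just one power of an $H^1$-type norm of $w^\vareps$. The resulting inequality $\|w^\vareps\|^2_{H^1_\vareps} \lesssim \|w^\vareps\|_{H^1_\vareps}$ will yield a uniform bound automatically, without needing Young's inequality.

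First, by the definition of $A_\vareps$ on the scale $Z_\vareps^{-1/2}=(H^1_\vareps(\Omega_\vareps))'$ and the definition of $F_\vareps$, the function $w^\vareps\in H^1(\Omega_\vareps)$ satisfies
\begin{equation*}
\int_{\Omega_\vareps}\!\Bigl(\ffrac{\partial w^\vareps}{\partial x_1}\ffrac{\partial \varphi}{\partial x_1} + \ffrac{1}{\vareps^2}\ffrac{\partial w^\vareps}{\partial x_2}\ffrac{\partial \varphi}{\partial x_2} + w^\vareps\varphi\Bigr)\,dx = \ffrac{1}{\vareps}\int_{\theta_\vareps}f(u^\vareps)\varphi\,dx,\qquad \forall\,\varphi\in H^1(\Omega_\vareps).
\end{equation*}
Choosing $\varphi = w^\vareps$ I obtain the energy identity
\begin{equation*}
\|w^\vareps\|^2_{H^1_\vareps(\Omega_\vareps)} \;=\; \ffrac{1}{\vareps}\int_{\theta_\vareps}f(u^\vareps)\,w^\vareps\,dx.
\end{equation*}

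Next, I would estimate the right-hand side by Cauchy--Schwarz in the concentrated integral. Using that $f$ is bounded (by Remark~\ref{limit_f}) and $|\theta_\vareps|\le \vareps h_1$, one gets $\vareps^{-1}\int_{\theta_\vareps}|f(u^\vareps)|^2\le \|f\|_\infty^2 h_1$. For the factor involving $w^\vareps$, I apply Theorem~\ref{lema_int_conc}, specifically estimate \eqref{3}, to conclude
\begin{equation*}
\ffrac{1}{\vareps}\int_{\theta_\vareps}f(u^\vareps)\,w^\vareps\,dx \;\le\; \|f\|_\infty\,h_1^{1/2}\,C^{1/2}\,\|w^\vareps\|_{H^1(\Omega_\vareps)}.
\end{equation*}

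Finally, for $0<\vareps<1$ the inequality $\|w^\vareps\|_{H^1(\Omega_\vareps)}\le \|w^\vareps\|_{H^1_\vareps(\Omega_\vareps)}$ is immediate from the definition of the weighted norm, so combining the last two displays yields
\begin{equation*}
\|w^\vareps\|^2_{H^1_\vareps(\Omega_\vareps)} \;\le\; \tilde C\,\|w^\vareps\|_{H^1_\vareps(\Omega_\vareps)},
\end{equation*}
with $\tilde C$ independent of $\vareps$ and of $u^\vareps$, hence $\|w^\vareps\|_{H^1(\Omega_\vareps)}\le \|w^\vareps\|_{H^1_\vareps(\Omega_\vareps)}\le \tilde C$. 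The whole argument is short and I do not expect a real obstacle: the only delicate point is that one must control the singular factor $1/\vareps$ produced by the concentrated integral by exactly one power of an $H^1$-type norm of $w^\vareps$, and this is precisely what Theorem~\ref{lema_int_conc} was designed to provide.
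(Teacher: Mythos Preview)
Your proposal is correct and follows essentially the same route as the paper: test the variational identity for $w^\vareps$ against $w^\vareps$ itself, then apply Cauchy--Schwarz together with the concentrated-integral estimate of Theorem~\ref{lema_int_conc} to obtain an inequality of the form $\|w^\vareps\|^2 \le C\|w^\vareps\|$. The only cosmetic difference is that you keep track of the weighted norm $\|\cdot\|_{H^1_\vareps}$ and then pass to $\|\cdot\|_{H^1}$ at the end, whereas the paper works directly with $\|\cdot\|_{H^1}$; your version is slightly more explicit but the argument is the same.
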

\begin{proof}
	Since $w^\vareps= A_\vareps^{-1}F_\vareps u^\vareps$, it follows that, for any $\varphi\in H^1(\Omega_\vareps)$,
	\begin{equation*}
	\int_{\Omega_\vareps}\ffrac{\partial w^\vareps}{\partial x_1}\ffrac{\partial\varphi}{\partial x_1}+\int_{\Omega_\vareps}\ffrac{\partial w^\vareps}{\partial x_2}\ffrac{\partial \varphi}{\partial x_2}+\int_{\Omega_\vareps}w^\vareps\varphi = \ffrac{1}{\vareps}\int_{\theta_{\vareps}}f(u^\vareps)\varphi
	\end{equation*}
	Therefore, if we take $\varphi=w^\vareps$, it follows from Lemma \ref{lema_int_conc} that 
	\begin{align*}
	\|w^\vareps\|^2_{H^1(\Omega_\vareps)}\leq \left( \ffrac{1}{\vareps}\int_{\theta_{\vareps}}|f(u^\vareps)|^2\right)^{1/2}&\left( \ffrac{1}{\vareps}\int_{\theta_{\vareps}}|w^\vareps|^2\right)^{1/2}\leq \|f\|_\infty h_1^{1/2}\|w^\vareps\|_{H^1(\Omega_{\vareps})}.
	\end{align*}
	Thus, $\|w^\vareps\|_{H^1(\Omega_\vareps)}\leq C$.
\end{proof}

Now, we will analyze the asymptotic behavior of the nonlinearities. 

\begin{proposition}\label{int_conv1}
	If we have $u^\varepsilon\in H^1(\Omega_\varepsilon)$ and $u_0\in H^1(\Omega)$ with $P_\vareps u^\vareps \rightharpoonup u_0$ in $H^1(\Omega)$ and $u_0(x_1,x_2)=u_0(x_1)$, for all $(x_1,x_2)\in \Omega$, then, for all $\varphi\in H^1(0,1)$, we have
	\begin{equation}
	\label{lim_eq_conc}
	\ffrac{1}{\vareps}\int_{\theta_\vareps}f(u^\vareps)\varphi \to \mu_h\int_0^1 f(u_0)\varphi
	\end{equation}
	\noindent where $\mu_h$ is the average of the function $h$ as in Remark \ref{muh}.
\end{proposition}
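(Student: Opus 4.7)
The plan is to split the integrand into two pieces: one containing the difference $f(u^\vareps)-f(u_0)$, and one containing only the limit $u_0$. Write
\begin{equation*}
\frac{1}{\vareps}\int_{\theta_\vareps}f(u^\vareps)\varphi = \mathcal{A}_\vareps + \mathcal{B}_\vareps,\qquad
\mathcal{A}_\vareps := \frac{1}{\vareps}\int_{\theta_\vareps}\bigl[f(u^\vareps)-f(u_0)\bigr]\varphi,\qquad
\mathcal{B}_\vareps := \frac{1}{\vareps}\int_{\theta_\vareps}f(u_0)\varphi.
\end{equation*}
The first piece should vanish because $u^\vareps$ becomes uniformly close to $u_0$ on the strip, and the second should converge via the oscillation average of $h_\vareps$.

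For $\mathcal{A}_\vareps$, I would first upgrade the weak convergence $P_\vareps u^\vareps \rightharpoonup u_0$ in $H^1(\Omega)$ to strong convergence $P_\vareps u^\vareps \to u_0$ in $X=L^2(0,1;H^s(0,g_1))$, using the compact embedding from Proposition \ref{leb_boch_int} (recall $1/2<s<1$). Since $\varphi \in H^1(0,1)\hookrightarrow L^\infty(0,1)$, $f$ is Lipschitz, and $P_\vareps u^\vareps=u^\vareps$ on $\theta_\vareps$, Cauchy-Schwarz together with Theorem \ref{lema_int_conc}\,\eqref{1} applied to $P_\vareps u^\vareps - u_0$ yields
\begin{equation*}
|\mathcal{A}_\vareps| \leq \|f'\|_\infty\|\varphi\|_\infty\,h_1^{1/2}\Bigl(\tfrac{1}{\vareps}\int_{\theta_\vareps}|P_\vareps u^\vareps - u_0|^2\Bigr)^{1/2} \leq C\,\|P_\vareps u^\vareps - u_0\|_X \longrightarrow 0,
\end{equation*}
using the elementary fact that the $H^s(0,g_\vareps(x_1))$-seminorm is dominated by the one on the larger fiber $(0,g_1)$.

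For $\mathcal{B}_\vareps$, since $u_0$ and $\varphi$ depend only on $x_1$, Fubini gives
\begin{equation*}
\mathcal{B}_\vareps = \int_0^1 \Bigl(\frac{1}{\vareps}\int_{g_\vareps(x_1)-\vareps h_\vareps(x_1)}^{g_\vareps(x_1)} dx_2\Bigr) f(u_0(x_1))\varphi(x_1)\,dx_1 = \int_0^1 h_\vareps(x_1)\,f(u_0(x_1))\,\varphi(x_1)\,dx_1.
\end{equation*}
Because $f(u_0)\varphi \in L^\infty(0,1)\subset L^1(0,1)$, the weak-$*$ convergence $h_\vareps \stackrel{*}{\rightharpoonup} \mu_h$ in $L^\infty(0,1)$ recalled in Remark \ref{muh} gives $\mathcal{B}_\vareps \to \mu_h \int_0^1 f(u_0)\varphi\,dx_1$, which is exactly the right-hand side of \eqref{lim_eq_conc}.

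The main technical obstacle I foresee is the interplay between the compact embedding and the localized integral: one must package the strong $X$-convergence so that it dominates the $\vareps^{-1}$-weighted integral on $\theta_\vareps$, which is precisely what the concentrated integral inequality \eqref{1} of Theorem \ref{lema_int_conc} provides. Once this link is set up, the rest of the argument is routine, and the decomposition cleanly separates the ``reaction'' contribution from the ``geometric'' contribution of the oscillating strip.
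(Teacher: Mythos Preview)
Your proposal is correct and follows essentially the same approach as the paper: the same two-term decomposition, the same use of the Lipschitz bound on $f$ combined with Theorem~\ref{lema_int_conc} and the strong convergence $P_\vareps u^\vareps\to u_0$ in $X$ to kill $\mathcal{A}_\vareps$, and the same Fubini computation together with $h_\vareps\stackrel{*}{\rightharpoonup}\mu_h$ for $\mathcal{B}_\vareps$. The only cosmetic difference is that you pull $\varphi$ out via the embedding $H^1(0,1)\hookrightarrow L^\infty(0,1)$, whereas the paper keeps it inside a Cauchy--Schwarz factor and bounds $\tfrac{1}{\vareps}\int_{\theta_\vareps}|\varphi|^2\le h_1\|\varphi\|_{L^2(0,1)}^2$ directly; both routes yield the same estimate.
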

\begin{proof}  
	Indeed, 
	\begin{align*}
	\ffrac{1}{\vareps}\int_{\theta_\vareps}f(u^\vareps)\varphi - \mu_h\int_0^1 f(u_0)\varphi = \ffrac{1}{\vareps}\int_{\theta_\vareps}(f(u^\vareps)-f(u_0))\varphi + \ffrac{1}{\vareps}\int_{\theta_\vareps}f(u_0)\varphi- \int_0^1 \mu_hf(u_0)\varphi.	
	\end{align*}
	Using the definition of $\mu_h$ and with standard computations, we have 
	\begin{align*}
	&\left|\ffrac{1}{\vareps}\int_{\theta_\vareps}f(u^\vareps)\varphi - \mu_h\int_0^1 f(u_0)\varphi\right| \leq \ffrac{1}{\vareps}\int_{\theta_\vareps}|f(u^\vareps(x_1,x_2))-f(u_0(x_1))||\varphi(x_1)|dx_2dx_1 \\ & \ \ \ 
	+  \left|\ffrac{1}{\vareps}\int_0^1\int_{g_\vareps(x_1)-\vareps h_\vareps(x_1)}^{g_\vareps(x_1)}f(u_0(x_1))\varphi(x_1)dx_2dx_1 - \int_0^1 f(u_0(x_1))\mu_h\varphi(x_1)dx_1\right| \\ & \leq \left(\ffrac{1}{\vareps}\int_{\theta_\vareps}|f(u^\vareps(x_1,x_2))-f(u_0(x_1))|^2dx_2dx_1\right)^{1/2}\left(\ffrac{1}{\vareps}\int_{\theta_\vareps}|\varphi(x_1)|^2dx_2dx_1\right)^{1/2} \\ & \ \ \ 
	+ \left|\int_0^1 \ffrac{1}{\vareps}[g_\vareps(x_1)-(g_\vareps(x_1)-\vareps h_\vareps(x_1))]f(u_0(x_1))\varphi(x_1)dx_1 - \int_0^1 f(u_0(x_1))\mu_h\varphi(x_1)dx_1\right|
	% \\ & \leq \|f'\|_{\infty}C_1\left(\ffrac{1}{\vareps}\int_{\theta_\vareps}|u^\vareps(x_1,x_2)-u_0(x_1)|^2dx_2dx_1\right)^{1/2}\|\varphi\|_{H^1(\Omega_\vareps)} 
	\end{align*}
	
Now,  since $\varphi=\varphi(x_1)$ we have $\left|\ffrac{1}{\vareps}\int_{\theta_\vareps}|\varphi(x_1)|^2dx_2dx_1\right|\leq h_1\|\varphi\|_{L^2(0,1)}^2\leq C\|\varphi\|_{H^1(0,1)}^2$.  Moreover, using Theorem \ref{lema_int_conc}, Remark  \ref{muh}  and the uniform bound of $f$ and $f'$, it follows that, for $1/2<s<1$,
	\begin{align*}
	 & \leq \|f'\|_{\infty}C_1\left(\ffrac{1}{\vareps}\int_{\theta_\vareps}|u^\vareps(x_1,x_2)-u_0(x_1)|^2dx_2dx_1\right)^{1/2}\|\varphi\|_{H^1(0,1)} 
%	\\& \ \ \ \ \  
	+  \left|\int_0^1 f(u_0(x_1))\varphi(x_1)(h_\vareps(x_1)-\mu_h)dx_1\right| \\ & \leq K\|u^\vareps-u_0\|_{X_\vareps}+  \left|\int_0^1 f(u_0(x_1))\varphi(x_1)(h_\vareps(x_1)-\mu_h)dx_1\right| \\ & \leq \tilde K \|P_\vareps u^\vareps-u_0\|_{X} +  \left|\int_0^1 f(u_0(x_1))\varphi(x_1)(h_\vareps(x_1)-\mu_h)dx_1\right| \longrightarrow 0 \text{ as } \vareps\to 0.
	\end{align*}
	This completes the proof.
\end{proof}

%{\color{red} ENTIENDO QUE LAS SIGUIENTES TRES DEFINICIONES SON GENERALES, Y POR TANTO HABRA QUE DECIR QUE $E_\vareps:X_0\to X_\vareps$, CON CIERTAS PROPIEDADES, $A_\vareps,A_0$ son operadores verificando ciertas propiedades, y $X_\vareps$, $X_0$ son espacios ....}

We also need a notion of compactness for sequences, and convergence for operators which are defined in different spaces.
We follow the exposition from \cite{CP}. See also \cite{arrieta_simone_lips}. 

In general, consider a family of Hilbert spaces $X_\vareps$ and a limit Hilbert space $X_0$. Besides, let $E_\vareps : X_0\to X_\vareps$ a family of operators such that $\|E_\vareps u\|_{X_\vareps}\to\|u\|_{X_0}$ when $\vareps \to 0$. 
We recall that a sequence $u^\vareps\in X_\varepsilon$ $E$-converges to $u_0\in X_0$, if $\|u^\vareps-E_\vareps u\|_{X_\varepsilon}\to0$. This will be denoted by $u_\vareps\xrightarrow{E}u$.

\begin{definition}
	A sequence $\{u_n\}$, $u_n\in X_{\varepsilon_n}$ with $\vareps_n\to0$, is $E$-precompact if for all subsequence $\{u_{n'}\}$ there are a subsequence $\{u_{n''}\}$ and an element $u\in X_0$ such that $u_{n''}\xrightarrow{E}u$. A family is said to be $E$-precompact is all sequence $\{u_n\}$, $u_n\in X_{\varepsilon_n}$ with $\vareps_n\to0$, is $E$-precompact.
\end{definition}

\begin{definition}
	We say that a family of operators $\{T_\vareps\}$, with $T_\vareps : X_\varepsilon \to X_\varepsilon$, $E$-converges to $T : X_0 \to X_0$ when $\vareps\to0$ if $T_\vareps u^\vareps\xrightarrow{E} Tu$ for any $u^\vareps \xrightarrow{E} u$. We denote this convergence by $T_\vareps\xrightarrow{EE}T$. 
\end{definition}

Finally, we may define a notion of compact convergence for operators.
\begin{definition}
	A family of compact operators $\{T_\vareps\}$, with $T_\vareps : X_\varepsilon \to X_\varepsilon$, converges compactly to $T : X_0 \to X_0$ when $\vareps\to0$ if, for any family $\{u^\vareps\}$ with $\|u^\vareps\|_{H^1(\Omega_\vareps)}$ uniformly bounded, we have that $\{T_\vareps u^\vareps\}$ is $E$-precompact and $T_\vareps \xrightarrow{EE}T$. We denote this compact convergence by $T_\vareps\xrightarrow{CC}T$. 
\end{definition}

%{\color{red}  EN CAMBIO A PARTIR DE AQUI, LOS OPERADORES SON LOS NUESTROS Y LOS ESPACIOS $X_\vareps$ y $X_0$ son los definidos anteriormente....}

For now on, consider again the spaces $X_\vareps$ defined in \eqref{x_e}. The next result show the compact convergence of the operators $A_\vareps^{-1}$ to $A_0^{-1}$, defined in \eqref{caso_se_a}, in the Sobolev-Bochner generalized spaces $L^2(0,1;H^s(0,g_\vareps(x_1)))$. 

\begin{proposition} \label{passo1}
	Using the notation given by \eqref{caso_se_a} and \eqref{caso_se_l}, we have 
 $A_\vareps^{-1} \stackrel{CC}{\longrightarrow} A_0^{-1}$ with $A_\vareps^{-1} : X_\vareps \to X_\vareps$.
\end{proposition}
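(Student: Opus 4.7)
The plan is to verify the two ingredients required for compact convergence: $E$-precompactness of $\{A_\vareps^{-1} u^\vareps\}$ whenever $\{u^\vareps\}$ is uniformly bounded in $H^1(\Omega_\vareps)$, and $EE$-convergence, i.e., $u^\vareps \xrightarrow{E} u_0$ in $X_\vareps$ implies $A_\vareps^{-1} u^\vareps \xrightarrow{E} A_0^{-1} u_0$. For the first, given such a family I would set $w^\vareps := A_\vareps^{-1} u^\vareps$ and test the weak formulation against $\varphi = w^\vareps$, obtaining
\[
\|w^\vareps\|_{H^1_\vareps(\Omega_\vareps)}^2 \leq \|u^\vareps\|_{L^2(\Omega_\vareps)}\,\|w^\vareps\|_{L^2(\Omega_\vareps)} \leq C\,\|w^\vareps\|_{H^1_\vareps(\Omega_\vareps)},
\]
so that $\|w^\vareps\|_{H^1_\vareps(\Omega_\vareps)}$ is bounded uniformly in $\vareps$. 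Proposition \ref{prop_norma} then supplies a subsequence and a limit $\tilde w \in H^1(0,1)$ with $P_\vareps w^\vareps \to \tilde w$ strongly in $X = L^2(0,1;H^s(0,g_1))$, weakly in $H^1(\Omega)$, and $\partial_{x_2} P_\vareps w^\vareps \to 0$ in $L^2(\Omega)$. Since $w^\vareps = (P_\vareps w^\vareps)|_{\Omega_\vareps}$ and the Gagliardo $H^s$-seminorm is monotone under restriction to sub-intervals, the strong convergence in $X$ yields $\|w^\vareps - E_\vareps \tilde w\|_{X_\vareps} \to 0$; that is, $w^\vareps \xrightarrow{E} \tilde w$, establishing $E$-precompactness.

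Under the $EE$-hypothesis $u^\vareps \xrightarrow{E} u_0$, the bound $\|u^\vareps\|_{L^2(\Omega_\vareps)} \leq \|u^\vareps\|_{X_\vareps}$ keeps the previous estimate in force and delivers a subsequential $E$-limit $\tilde w$. To identify $\tilde w$ with $A_0^{-1} u_0$, I would pass to the limit in the variational formulation of $A_\vareps w^\vareps = u^\vareps$ using test functions $\varphi = \varphi(x_1) \in C^\infty([0,1])$: the $\vareps^{-2}\partial_{x_2}$-term drops, while the zeroth-order terms are handled by combining $g_\vareps \stackrel{*}{\rightharpoonup} \mu_g$ with $P_\vareps w^\vareps \to \tilde w$ in $L^2(\Omega)$ and $\|u^\vareps - E_\vareps u_0\|_{X_\vareps} \to 0$, producing the limits $\mu_g \int_0^1 \tilde w\, \varphi$ and $\mu_g \int_0^1 u_0\, \varphi$ respectively.

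The main obstacle is the homogenization of the principal term,
\[
\int_{\Omega_\vareps} \partial_{x_1} w^\vareps\, \partial_{x_1}\varphi \;\longrightarrow\; \mu_g\, q_0 \int_0^1 \tilde w'\,\varphi',
\]
with the effective coefficient $q_0$ arising from the cell problem for $X$ on $Y^*$. This is exactly the oscillating-thin-domain homogenization carried out in \cite{arr_marcone_artigo_base}, whose corrector argument, built from the auxiliary function $X$ and exploiting the weak convergence of $\partial_{x_1} P_\vareps w^\vareps$ together with the periodic-average behavior of the oscillating coefficients, I would invoke. Combining the three limits gives the weak formulation of $A_0 \tilde w = u_0$ on $(0,1)$, so $\tilde w = A_0^{-1} u_0$ by uniqueness. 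Since the limit is independent of the chosen subsequence, the full family $E$-converges, establishing $A_\vareps^{-1} \xrightarrow{CC} A_0^{-1}$.
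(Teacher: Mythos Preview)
Your proposal is correct and follows essentially the same route as the paper: an energy estimate gives a uniform $H^1_\vareps$-bound on $w^\vareps = A_\vareps^{-1} u^\vareps$, Proposition~\ref{prop_norma} furnishes the subsequential $E$-limit, and the identification of that limit with $A_0^{-1} u_0$ is obtained by passing to the limit in the weak formulation via the homogenization result of \cite{arr_marcone_artigo_base}. The paper packages this last step slightly differently, first showing that $\hat u^\vareps(x_1) := \int_0^{g_\vareps(x_1)} u^\vareps(x_1,x_2)\,dx_2$ converges weakly in $L^2(0,1)$ and then invoking \cite[Theorem~4.3]{arr_marcone_artigo_base} as a black box, but the substance is identical.

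One small omission: the definition of compact convergence in the paper requires each $A_\vareps^{-1}$ to be a compact operator on $X_\vareps$, and you do not verify this. The paper checks it explicitly via the chain $X_\vareps \hookrightarrow L^2(\Omega_\vareps) \xrightarrow{A_\vareps^{-1}} H^1(\Omega_\vareps) \hookrightarrow X_\vareps$, the last embedding being compact by Proposition~\ref{leb_boch_int}. You should include this one-line check.
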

\begin{proof}
	It will be proved in three parts.
	\begin{enumerate}[(i)]
		\item $A^{-1}_\vareps$ is compact for each $\vareps>0$.
		
		Using previous results from, for instance \cite{arr_marcone_artigo_base}, we have $A^{-1}_\vareps:L^2(\Omega_\vareps)\to H^1(\Omega_\vareps)$ is compact. Hence, since $X_\vareps$ is continuously embedded in $L^2(\Omega_\vareps)$, and $H^1(\Omega_\vareps)$ is compactly embedded in $X_\vareps$ by Proposition \ref{leb_boch_int}, we have that 
		\begin{equation*}
		X_\vareps \stackrel{i}{\longrightarrow} L^2(\Omega_\vareps) \stackrel{A_\vareps^{-1}}{\hookrightarrow} H^1(\Omega_\vareps) \stackrel{i}{\longrightarrow} X_\vareps.
		\end{equation*} 
		Thus, $A^{-1}_\vareps:X_\vareps\to X_\vareps$ is a family of compact operators for each $\vareps>0$. The proof for $\vareps=0$ is analogous.
		
		\item The family $\{A^{-1}_\vareps f^\vareps\}$ is $E$-precompact when $\|f^\vareps\|_{X_\vareps}$ is bounded.
		
		In fact, if $\{f^\vareps\}_{\vareps\in(0,1)}$ in $X_\vareps$ is such that $\|f^\vareps\|_{X_\vareps}\leq M$, define $u^\vareps:=A^{-1}_\vareps f^\vareps$. Then $A_\vareps u^\vareps=f^\vareps$, and $u^\vareps$ satisfies, for $\vareps$ sufficiently small, that
		\begin{align*}
		\|u^\vareps\|^2_{H^1_\vareps(\Omega_\vareps)} & \leq \int_{\Omega_\vareps}|f^\vareps u^\vareps|\leq \left(\int_{\Omega_\vareps}|f^\vareps|^2\right)^{1/2}\left(
		\int_{\Omega_\vareps}|u^\vareps|^2\right)^{1/2} \\ & = \|f^\vareps\|_{L^2(\Omega_\vareps)} \|u^\vareps\|_{L^2(\Omega_\vareps)}  \leq \|f^\vareps\|_{X_\vareps}\|u^\vareps\|_{H^1(\Omega_\vareps)} \leq M\|u^\vareps\|_{H^1_\vareps(\Omega_\vareps)}.
		\end{align*}
		
		Using Proposition \ref{prop_norma}, it follows that $\|P_\vareps u^\vareps\|_{H^1(\Omega)}$ is uniformly bounded and there are $u_0\in H^1(0,1)$ and subsequence, that we will also call $P_\vareps u^\vareps$, such that $P_\vareps u^\vareps\rightharpoonup u_0$ in $H^1(\Omega)$ and, consequently, $P_\vareps u^\vareps \to u_0$ in $X$. Furthermore it follows that
		\begin{equation*}
		\|A^{-1}_\vareps f^\vareps-E_\vareps u_0\|_{X_\vareps} =\|u^\vareps-E_\vareps u_0\|_{X_\vareps}=\|(P_\vareps u^\vareps-u_0)|_{\Omega_\vareps}\|_{X_\vareps}\leq \|P_\vareps u^\vareps-u_0\|_{X}\to 0.
		\end{equation*} 
		
		\item If $f_\vareps\stackrel{E}{\rightarrow} f_0$, then $A_\vareps^{-1}f^\vareps \stackrel{E}{\rightarrow} A_0^{-1}f_0$.
		
		Indeed, like the previous item, suppose $u^\vareps:=A_\vareps^{-1} f^\vareps$. It follows that $A_\vareps u^\vareps=f^\vareps$, $\|f^\vareps\|_{X_\vareps}$ is bounded, since is $E$-convergent, and then there are again subsequence of $u^\vareps$ (also called $u^\vareps$) and $u_0\in H^1(0,1)$ such that $P_\vareps u^\vareps \rightharpoonup u_0$ in $H^1(\Omega)$. 
		
		Since $f_\vareps \stackrel{E}{\rightarrow} f_0$, it follows that
		\begin{equation*}
		\hat{f^\vareps} := \int_{0}^{g_\vareps(x_1)}f^\vareps(x_1,x_2)dx_2\rightharpoonup f_0
		\end{equation*}
		in $L^2(0,1)$. Thus, using \cite[Theorem 4.3]{arr_marcone_artigo_base} we have that $u_0\in H^1(0,1)$ satisfies
		\begin{equation*}
		\int_{0}^{1}(-q_0u''_0+u_0)\varphi=\int_{0}^{1}f_0\varphi, \ \forall \varphi \in H^1(0,1).
		\end{equation*}
		Furthermore, $A_0u_0=f_0$, that is, $u_0=A_0^{-1}f_0$ and
		\begin{equation*}
		\|A^{-1}_\vareps f^\vareps-E_\vareps A_0^{-1}f_0\|_{X_\vareps} =\|u^\vareps-E_\vareps u_0\|_{X_\vareps}=\|(P_\vareps u^\vareps-u_0)|_{\Omega_\vareps}\|_{X_\vareps}\leq \|P_\vareps u^\vareps-u_0\|_{X}\to 0.
		\end{equation*}  
	\end{enumerate}
	Thus, we conclude the proof.
\end{proof}

Now, we prove the result that will guarantee the semicontinuity of the solutions of problem \eqref{caso_se_a}.
\begin{proposition}\label{cc1_sim}
	Using the previous notation,  we have $A_\vareps^{-1}F_\vareps\xrightarrow{CC}A_0^{-1}F_0$ as $\varepsilon \to 0$.
\end{proposition}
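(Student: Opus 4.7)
The plan is to verify the three requirements in the definition of compact convergence applied to $T_\varepsilon := A_\varepsilon^{-1}F_\varepsilon : X_\varepsilon \to X_\varepsilon$: (i) each $T_\varepsilon$ is compact; (ii) for every family $\{u^\varepsilon\}$ with $\|u^\varepsilon\|_{H^1(\Omega_\varepsilon)}$ uniformly bounded, the image $\{T_\varepsilon u^\varepsilon\}$ is $E$-precompact; (iii) $u^\varepsilon \xrightarrow{E} u_0$ implies $T_\varepsilon u^\varepsilon \xrightarrow{E} A_0^{-1} F_0 (u_0)$.

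For (i), Proposition \ref{F_dif1}(a) provides uniform boundedness of $F_\varepsilon : X_\varepsilon \to X_\varepsilon'$, while $A_\varepsilon^{-1}$ sends $X_\varepsilon'$ continuously into $H^1(\Omega_\varepsilon)$. Since $H^1(\Omega_\varepsilon)$ embeds compactly into $X_\varepsilon$ by Proposition \ref{leb_boch_int}, the composition $A_\varepsilon^{-1} F_\varepsilon$ is compact, and the analogous argument handles $A_0^{-1}F_0$.

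For (ii), set $w^\varepsilon := A_\varepsilon^{-1} F_\varepsilon(u^\varepsilon)$. Lemma \ref{lema5.2} gives $\|w^\varepsilon\|_{H^1(\Omega_\varepsilon)} \leq C$ uniformly in $\varepsilon$. Extending via Lemma \ref{lema_ext} and invoking Proposition \ref{prop_norma}, we extract a subsequence and $w_0\in H^1(0,1)$ with $P_\varepsilon w^\varepsilon \to w_0$ in $X = L^2(0,1;H^s(0,g_1))$. Restricting to $\Omega_\varepsilon$ yields $\|w^\varepsilon - E_\varepsilon w_0\|_{X_\varepsilon} \leq \|P_\varepsilon w^\varepsilon - w_0\|_X \to 0$, so $w^\varepsilon \xrightarrow{E} w_0$.

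For (iii), suppose $u^\varepsilon \xrightarrow{E} u_0$ (hence $\|u^\varepsilon\|_{X_\varepsilon}$ bounded, and via the extension $P_\varepsilon u^\varepsilon \rightharpoonup u_0$ in $H^1(\Omega)$ by Proposition \ref{prop_norma}). Extract a subsequential limit $w_0\in H^1(0,1)$ of $w^\varepsilon = A_\varepsilon^{-1}F_\varepsilon(u^\varepsilon)$ as in (ii). The key task is to identify $w_0 = A_0^{-1}F_0(u_0)$ by passing to the limit in
$$\int_{\Omega_\varepsilon}\!\left(\partial_{x_1}\!w^\varepsilon \, \partial_{x_1}\!\varphi \;+\; \tfrac{1}{\varepsilon^{2}}\partial_{x_2}\!w^\varepsilon \, \partial_{x_2}\!\varphi \;+\; w^\varepsilon \varphi\right) \;=\; \tfrac{1}{\varepsilon}\int_{\theta_\varepsilon} f(u^\varepsilon)\varphi.$$
Testing against $\varphi = \varphi(x_1) \in H^1(0,1)$, the right-hand side converges to $\mu_h \int_0^1 f(u_0)\varphi$ directly by Proposition \ref{int_conv1}. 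For the left-hand side, one uses the weak convergences $P_\varepsilon w^\varepsilon \rightharpoonup w_0$ in $H^1(\Omega)$ and $\partial_{x_2} P_\varepsilon w^\varepsilon \to 0$ in $L^2(\Omega)$ from Proposition \ref{prop_norma}, together with correctors built from the cell solution $X$, to recover $\mu_g q_0\int_0^1 w_0'\varphi' + \mu_g\int_0^1 w_0\varphi$ following the scheme of \cite[Theorem 4.3]{arr_marcone_artigo_base}. Equating the limits yields the weak form of $A_0 w_0 = F_0(u_0)$; uniqueness of the solution means the entire sequence (not just the subsequence) $E$-converges to $A_0^{-1}F_0(u_0)$.

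The main obstacle is the last step, where the $\varepsilon^{-2}$ prefactor combined with the oscillating profile of $\Omega_\varepsilon$ forces the introduction of oscillating test functions based on $X$ to correctly identify the homogenized coefficient $q_0$; the nonlinear concentrated integral on the right causes no trouble beyond Proposition \ref{int_conv1} thanks to the strong convergence $P_\varepsilon u^\varepsilon \to u_0$ in $X$.
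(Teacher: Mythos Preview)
Your three-step verification of compact convergence matches the paper's proof almost exactly: compactness of each $A_\varepsilon^{-1}F_\varepsilon$, $E$-precompactness via Lemma~\ref{lema5.2} and Proposition~\ref{prop_norma}, and identification of the limit through Proposition~\ref{int_conv1} combined with \cite[Theorem~4.3]{arr_marcone_artigo_base}. The overall architecture is correct and is the same route the paper takes.

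There is, however, one unjustified step in your part (iii). You write that $u^\varepsilon \xrightarrow{E} u_0$ gives, ``via the extension, $P_\varepsilon u^\varepsilon \rightharpoonup u_0$ in $H^1(\Omega)$ by Proposition~\ref{prop_norma}.'' But Proposition~\ref{prop_norma} requires a uniform bound $\|u^\varepsilon\|_{H^1_\varepsilon(\Omega_\varepsilon)} \leq K$, and $E$-convergence takes place only in $X_\varepsilon = L^2(0,1;H^s(0,g_\varepsilon(x_1)))$; a general $u^\varepsilon \in X_\varepsilon$ need not lie in $H^1(\Omega_\varepsilon)$ at all, so neither the extension bound nor the weak $H^1$ limit is available. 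The same objection applies to your closing remark about ``strong convergence $P_\varepsilon u^\varepsilon \to u_0$ in $X$.'' Fortunately this does not break the argument: inspecting the proof of Proposition~\ref{int_conv1}, the only fact about $u^\varepsilon$ that is used is $\|u^\varepsilon - u_0\|_{X_\varepsilon} \to 0$, which is exactly the content of $u^\varepsilon \xrightarrow{E} u_0$. The paper accordingly invokes Proposition~\ref{int_conv1} directly from $E$-convergence, without claiming any $H^1$ control on $u^\varepsilon$. You should drop the parenthetical $H^1$ claim and appeal to the $X_\varepsilon$-convergence instead; with that correction your proof coincides with the paper's.
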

\begin{proof}
	We will also divide this proof in three parts.
\begin{enumerate}[(a)]
	\item $A^{-1}_\vareps F_\vareps$ is compact for each $\vareps>0$, where $A_\vareps^{-1} F_\vareps : X_\vareps\to X_\vareps$.
	
	In fact, since $A_\vareps^{-1}$ is compact by Proposition \ref{passo1} and $F_\vareps$ is Lipscthiz with constant that is independent of $\vareps$ by Proposition \ref{F_dif1}$(b)$, the result follows by composing those applications. Thus $A^{-1}_\vareps F_\vareps:X_\vareps\to X_\vareps$ is a family of compact operators for each $\vareps>0$. The proof for $\vareps=0$ is analogous.
		
	\item $\{A^{-1}_\vareps F_\vareps(u^\vareps)\}$ is $E$-precompact when $\|u^\vareps\|_{X_\vareps}$ is bounded. 
	
	Define $z^\vareps:=A^{-1}_\vareps F_\vareps(u^\vareps)$ and, consequently, $A_\vareps z^\vareps= F_\vareps(u^\vareps)$. Since $u^\vareps\in X_\vareps$, we have $z^\vareps\in H^1(\Omega_\vareps)$. If we call $f^\vareps = f(u^\vareps)\chi^{\theta_{\vareps}}/\vareps$ for each $\vareps>0$, we obtain $f^\vareps\in L^2(\Omega_\vareps)$ and
	\begin{equation*}
	\int_{\Omega_{\vareps}}|f^\vareps|^2 = \ffrac{1}{\vareps}\int_{\theta_\vareps} |f(u^\vareps)|^2\leq \|f\|_{\infty}^2h_1 = K
	\end{equation*}
	\noindent where $K>0$ is independent of $\vareps$. It follows that $\|z^\vareps\|_{H^1_\vareps(\Omega_\vareps)}\leq K$ and by Proposition \ref{prop_norma} we have that $\|P_\vareps z^\vareps\|_{H^1(\Omega_\vareps)}$ is uniformly bounded, where $P_\vareps$ is the extension operator from Lemma \ref{lema_ext}. 
	
	Also, from Proposition \ref{prop_norma}, there are $z_0\in H^1(\Omega)$ and subsequence, that we will also call $P_\vareps z^\vareps$, such that $P_\vareps z^\vareps \to z_0$ in $X$ and $z_0$ is independent of the second variable. Hence, we have $z_0 \in H^1(0,1)\subset X_0$.
	
	Thus,
	\begin{equation*}
	\|A^{-1}_\vareps F_\vareps(u^\vareps)-E_\vareps z_0\|_{X_\vareps} = \|z^\vareps-E_\vareps z_0\|_{X_\vareps}=\|(P_\vareps z^\vareps-z_0)|_{\Omega_\vareps}\|_{X_\vareps}\leq \|P_\vareps z^\vareps-z_0\|_{X}\to0.
	\end{equation*}
	
	\item $A_\vareps^{-1}F_\vareps(u^\vareps) \xrightarrow{E} A_0^{-1}F_0(u_0)$ if $u^\vareps\xrightarrow{E} u_0$.
	
	Arguing as in the previous item, let us define $z^\vareps:=A^{-1}_\vareps F_\vareps(u^\vareps)$, and then, $A_\vareps z^\vareps= F_\vareps(u^\vareps)$. Since $u^\vareps\in X_\vareps$, we have $z^\vareps\in H^1(\Omega_\vareps)$. If we call $f^\vareps = f(u^\vareps)\chi^{\theta_{\vareps}}/\vareps$ for each $\vareps>0$, we have
	\begin{equation*}
	\int_{\Omega_{\vareps}}|f^\vareps|^2dx = \ffrac{1}{\vareps}\int_{\theta_\vareps} |f(u^\vareps)|^2\leq \|f\|_\infty^2 h_1= K,
	\end{equation*}
	\noindent with $K>0$ independent of $\vareps$. Furthermore, since $u^\vareps \xrightarrow{E}u_0$, if we define %(como no corolário \ref{conv_caso_e} de convergência do caso linear) 
	\begin{equation*}
	\hat{f^\vareps}(x_1)=\ffrac{1}{\vareps}\int_{g_\vareps(x_1)-\vareps h_\vareps(x_1)}^{g_\vareps(x_1)}f(u^\vareps(x_1,x_2))dx_2,
	\end{equation*}
	we have $\hat{f^\vareps} \rightharpoonup \hat{f}$ in $L^2(0,1)$ by Proposition \ref{int_conv1}, with $\hat{f}(x_1) = \mu_hf(u_0(x_1))$. Indeed, for all $\varphi\in L^2(0,1)$ 
	\begin{align*}
	\int_0^1&\left(\ffrac{1}{\vareps}\int_{g_\vareps(x_1)-\vareps h_\vareps(x_1)}^{g_\vareps(x_1)}f(u^\vareps) dx_2-\mu_h f(u_0)\right) \varphi dx_1= \ffrac{1}{\vareps}\int_{\theta_\vareps}f(u^\vareps)\varphi- \int_0^1\mu_hf(u_0)\varphi \to 0.
	\end{align*}
	Consequently, from \cite[Theorem 4.3]{arr_marcone_artigo_base} there is $z_0\in H^1(0,1)$ such that $P_\vareps z^\vareps \to z_0$ in $X$, where $z_0$ satisfies, for all $\varphi\in H^1(0,1)$,
	\begin{equation*}
	\int_0^1(q_0z_0'\varphi'+z_0\varphi) = \int_0^1 \ffrac{L_g}{|Y^*|}\mu_gf(u_0)\varphi.
	\end{equation*}
	It follows from the definition of $A_0$ and $F_0$ that $z_0=A_0^{-1}F_0(u_0)$, and then,
	\begin{align*}
	\|A^{-1}_\vareps F_\vareps(u^\vareps)-&E_\vareps A_0^{-1}F_0(u_0)\|_{X_\vareps} = \|z^\vareps-E_\vareps z_0\|_{H^s(\Omega_\vareps)}\\&=\|(P_\vareps z^\vareps-z_0)|_{\Omega_\vareps}\|_{X_\vareps}\leq \|P_\vareps z^\vareps-z_0\|_{X}\to0
	\end{align*}
	concluding the proof.
	\end{enumerate}
\end{proof}

As a consequence of Proposition \ref{cc1_sim}, we can get the following proposition:
\begin{proposition}\label{semi_sup_simone}
	For any family of solutions $\{u^\vareps_*\}$ of \eqref{caso_se_a}, there is $u_*$ solution of \eqref{caso_se_l} and a subsequence of $u^\vareps_*$, also called $u^\vareps_*$, such that $u^\vareps_*\xrightarrow{E}u_*$.
\end{proposition}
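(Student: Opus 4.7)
The proof will be a short application of the machinery assembled above, in particular Proposition \ref{cc1_sim} on compact convergence $A_\vareps^{-1}F_\vareps \xrightarrow{CC} A_0^{-1}F_0$ together with Lemma \ref{lema5.2} for the uniform bound on solutions.

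First I would exploit that every $u^\vareps_* \in \E_{\vareps,R}$ satisfies the fixed-point identity $u^\vareps_* = A_\vareps^{-1}F_\vareps(u^\vareps_*)$. Applying Lemma \ref{lema5.2} with $u^\vareps = u^\vareps_*$ and $w^\vareps = u^\vareps_*$ gives a constant $C>0$, independent of $\vareps$, with $\|u^\vareps_*\|_{H^1(\Omega_\vareps)} \leq C$. By Proposition \ref{leb_boch_int}, the embedding $H^1(\Omega_\vareps)\hookrightarrow X_\vareps$ has constant independent of $\vareps$, hence $\|u^\vareps_*\|_{X_\vareps}$ is uniformly bounded as $\vareps \to 0$.

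Next, I invoke the compact convergence established in Proposition \ref{cc1_sim}. Since the family $\{u^\vareps_*\}$ is uniformly bounded (in fact in $H^1(\Omega_\vareps)$, which is the hypothesis required in the definition of compact convergence we are using), the set $\{A_\vareps^{-1}F_\vareps(u^\vareps_*)\}$ is $E$-precompact. But $A_\vareps^{-1}F_\vareps(u^\vareps_*) = u^\vareps_*$, so $\{u^\vareps_*\}$ itself is $E$-precompact. Extracting a subsequence (not relabeled), there exists $u_* \in X_0$ such that $u^\vareps_* \xrightarrow{E} u_*$ as $\vareps \to 0$.

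Finally I have to check that $u_*$ is actually a solution of \eqref{caso_se_l}. For this I use the $EE$-convergence half of the compact convergence: since $u^\vareps_* \xrightarrow{E} u_*$, Proposition \ref{cc1_sim} yields
\begin{equation*}
A_\vareps^{-1}F_\vareps(u^\vareps_*) \xrightarrow{E} A_0^{-1}F_0(u_*).
\end{equation*}
The left-hand side equals $u^\vareps_*$, which $E$-converges to $u_*$ by construction. Uniqueness of the $E$-limit (a direct consequence of $\|E_\vareps v\|_{X_\vareps} \to \|v\|_{X_0}$) forces $u_* = A_0^{-1}F_0(u_*)$, i.e.\ $u_*$ solves \eqref{caso_se_l}, and by elliptic regularity $u_* \in H^1(0,1)$. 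The only mild subtlety is that the definition of $E$-precompactness used in Proposition \ref{cc1_sim} is phrased in terms of $H^1(\Omega_\vareps)$-bounds rather than $X_\vareps$-bounds, but this is exactly what Lemma \ref{lema5.2} supplies, so no extra work is needed. There is no real obstacle here — the whole proposition is the intended harvest of the compact-convergence framework.
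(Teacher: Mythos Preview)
Your argument is correct and is precisely the abstract fixed-point argument that the paper invokes by citation: the paper's own proof is a one-line reference to \cite[Corollary 5.2]{arrieta_simone_lips} or \cite[Proposition 5.6]{dumb_1}, whose content is exactly the compact-convergence plus uniqueness-of-$E$-limit reasoning you have written out. The only cosmetic slip is your mention of $u^\vareps_*\in\E_{\vareps,R}$, which the proposition does not assume; but since Lemma~\ref{lema5.2} requires only $u^\vareps\in H^1(\Omega_\vareps)$, this does not affect the argument.
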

\begin{proof}
	It is a direct consequence of \cite[Corollary 5.2]{arrieta_simone_lips} or \cite[Proposition 5.6]{dumb_1}.
	%\cite[Teorema 3.1]{arrieta_simone_lips}.
\end{proof}

We also get the reciprocal of the previous proposition when the limit solution is hyperbolic.

\begin{proposition}\label{semi_inf_simone}
	If the solution $u^*$ of \eqref{caso_se_l} is hyperbolic, then there is a sequence $\{u^\vareps_*\}$ of solutions of \eqref{caso_se_a} such that $u^\vareps_*\xrightarrow{E}u^*$.
\end{proposition}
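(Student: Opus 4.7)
The plan is to mirror the structure of the upper-semicontinuity proof: recast the problem as a fixed-point equation, use Proposition \ref{cc1_sim} to get compact convergence of the nonlinear solution operators, and then invoke an abstract lower-semicontinuity theorem for hyperbolic fixed points under compact convergence, such as \cite[Corollary 5.3]{arrieta_simone_lips} or \cite[Proposition 5.7]{dumb_1}.

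More concretely, I would set $T_\vareps = A_\vareps^{-1} F_\vareps : X_\vareps \to X_\vareps$ and $T_0 = A_0^{-1} F_0 : X_0 \to X_0$, so that solutions of \eqref{caso_se_a} and \eqref{caso_se_l} are exactly the fixed points of $T_\vareps$ and $T_0$ respectively. Proposition \ref{cc1_sim} gives $T_\vareps \xrightarrow{CC} T_0$. The hyperbolicity hypothesis on $u^*$ translates (via the definition of $F_0$ and $A_0$, using $q_0 u''_{xx}$ and $\partial_u f_0$) into the statement that $\lambda = 1$ is \emph{not} in the spectrum of the compact operator $T_0'(u^*) : X_0 \to X_0$. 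Equivalently, $I - T_0'(u^*)$ is an isomorphism on $X_0$, and $u^*$ is an isolated fixed point of $T_0$ with Leray--Schauder degree $\deg(I - T_0, B_r(u^*), 0) = \pm 1$ for all sufficiently small $r > 0$.

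To apply the abstract stability result, one must also check that the linearizations compactly converge: $D T_\vareps(E_\vareps u^*) \xrightarrow{CC} D T_0(u^*)$, or at the very least that $T_\vareps$ converges to $T_0$ in a uniform-on-bounded-sets sense adapted to the $E$-convergence framework. This follows from two ingredients already developed in the paper: the $C^2$ regularity of $f$ together with the Lipschitz estimate (Proposition \ref{F_dif1}) yields uniform bounds on $F_\vareps'$, and the concentrated integral estimates of Theorem \ref{lema_int_conc} let one repeat the argument of Proposition \ref{int_conv1} applied to $f'(u^\vareps)$ instead of $f(u^\vareps)$, producing convergence of the linearized nonlinearities exactly analogous to \eqref{lim_eq_conc}. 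Combined with Proposition \ref{passo1} for $A_\vareps^{-1}$, this gives $CC$-convergence of the derivatives.

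Once these hypotheses are verified, the standard Leray--Schauder degree argument in the varying-space setting of \cite{arrieta_simone_lips, dumb_1} yields the conclusion: for every $r > 0$ small, there exists $\vareps_0 = \vareps_0(r) > 0$ such that for $0 < \vareps < \vareps_0$ the degree $\deg(I - T_\vareps, B_r(E_\vareps u^*), 0)$ is defined and equal to $\deg(I - T_0, B_r(u^*), 0) \neq 0$, which produces a fixed point $u^\vareps_* \in B_r(E_\vareps u^*) \subset X_\vareps$, i.e.\ a solution of \eqref{caso_se_a}. Letting $r \to 0$ and taking a diagonal sequence gives $u^\vareps_* \xrightarrow{E} u^*$. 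The main obstacle is not the degree invariance itself but checking the $CC$-convergence of the Fréchet derivatives in the varying Sobolev--Bochner spaces; this is where the care taken in Section \ref{espacos} and Theorem \ref{lema_int_conc} pays off, since without the sharp concentrated-integral bound one could not control the remainder of the linearization uniformly in $\vareps$.
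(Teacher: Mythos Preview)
Your proposal is correct and takes essentially the same approach as the paper: the paper's proof is a one-line citation of \cite[Corollary 5.3]{arrieta_simone_lips} or \cite[Proposition 5.7]{dumb_1}, relying on the compact convergence already established in Proposition \ref{cc1_sim}. Your write-up simply unpacks what those abstract results require (the degree argument and the convergence of the linearizations), which is helpful context but not additional to the paper's argument.
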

\begin{proof}
	It follows from \cite[Corollary 5.3]{arrieta_simone_lips} or \cite[Proposition 5.7]{dumb_1}.
\end{proof}

\begin{remark}
	In the case when all equilibria points from the limit equation \eqref{caso_se_l} are hyperbolic, we have that all of them are isolated, and then, there exists only a finite number of them (see \cite[Corollary 5.4 or Proposition 5.5]{dumb_1}). 
\end{remark}

Furthermore, the previous results prove the upper and lower semicontinuity of the equilibrium set at $\vareps=0$.
\begin{proof}[Proof of Theorem \ref{teo_main}]
The item $(a)$ follows from Proposition \ref{semi_sup_simone}, and the item $(b)$ is a consequence of Proposition \ref{semi_inf_simone}.
\end{proof}

\vspace{0.7 cm}

{\bf Acknowledgements.} 
The first author (JMA)$^*$ is partially  supported by grants MTM2016-75465,  ICMAT Severo Ochoa project SEV-2015-0554, MINECO, Spain and Grupo de Investigaci\'on CADEDIF, UCM. The second author (AN)$^\diamond$ was supported by CNPq 141869/2013-5, Brazil. Third one (MCP)$^\dagger$ is partially supported by CNPq 303253/2017-7 and FAPESP 2017/02630-2 Brazil.

\end{document}